\let\mathcal\mathscr
\numberwithin{equation}{section}
\newtheorem{theorem}{Theorem}[section]
\newtheorem{lemma}[theorem]{Lemma}
\theoremstyle{definition}
\newtheorem*{ack}{Acknowledgements}
\renewcommand{\d}{\mathrm{d}}
\renewcommand{\phi}{\varphi}
\renewcommand{\rho}{\varrho}
\newcommand{\sumstar}{\sideset{}{^*}\sum}
\newcommand{\0}{\mathbf{0}}
\newcommand{\PP}{\mathbb{P}}
\renewcommand{\AA}{\mathbb{A}}
\newcommand{\FF}{\mathbb{F}}
\newcommand{\ZZ}{\mathbb{Z}}
\newcommand{\NN}{\mathbb{Z}_{>0}}
\newcommand{\RR}{\mathbb{R}}
\newcommand{\CC}{\mathbb{C}}
\newcommand{\TT}{\mathbb{T}}
\newcommand{\cO}{\mathcal{O}}
\renewcommand{\leq}{\leqslant}
\renewcommand{\geq}{\geqslant}
\renewcommand{\bar}{\overline}
\newcommand{\x}{\mathbf{x}}
\newcommand{\y}{\mathbf{y}}
\renewcommand{\v}{\mathbf{v}}
\renewcommand{\u}{\mathbf{u}}
\DeclareMathOperator{\Spec}{Spec}
\DeclareMathOperator{\tr}{Tr}
\DeclareMathOperator{\ord}{ord}
\DeclareMathOperator{\mor}{Mor}
\DeclareMathOperator{\ch}{char}
\newcommand{\vp}{\varpi}
\renewcommand{\hat}{\widehat}
\newcommand{\Ki}{K_\infty}
\newcommand{\hQ}{\widehat{Q}}
\newcommand{\matr}[4]{\left( \begin{matrix} #1 & #2 \\ #3 & #4 \end{matrix} \right) }
\begin{document}

\subjclass[2010]{14H10 (11P55, 14G05)}

\date{\today}

\title[Rational curves on smooth  hypersurfaces]
{Rational curves on smooth\\  hypersurfaces of low degree}

\author{T.D. Browning}
\address{School of Mathematics\\
University of Bristol\\ Bristol\\ BS8 1TW
\\ UK}
\email{t.d.browning@bristol.ac.uk}

\author{P. Vishe}
\address{Department of Mathematical Sciences\\ Durham University\\ Durham\\ DH1 3LE\\ UK}
\email{pankaj.vishe@durham.ac.uk}

\begin{abstract}
We study the family of rational curves on arbitrary smooth hypersurfaces of low degree using tools from analytic number theory.
\end{abstract}

\maketitle

\setcounter{tocdepth}{1}
\tableofcontents

\thispagestyle{empty}

\section{Introduction}

The geometry of a variety is intimately linked to the geometry of the space of rational curves on it. 
Given a  projective variety
  $X$
  defined over $\CC$, a natural object to study is 
the moduli space of rational curves on $X$.
There are many results in the literature establishing the irreducibility of such mapping spaces, but most   statements are only proved for generic $X$.
Following a strategy of Ellenberg and Venkatesh, 
we shall use tools from analytic number theory to prove such a result for all smooth hypersurfaces of sufficiently low degree.

Let  $X\subset \PP^{n}$ be a smooth Fano  hypersurface of degree $d$ defined over $\CC$, with $n\geq 3$. For each positive integer $e$, the 
 Kontsevich moduli space 
$\overline{\mathcal{M}}_{0,0}(X,e)$ is a compactification of the space 
${\mathcal{M}_{0,0}}(X,e)$  of morphisms of degree $e$ from $\PP^1$ to $X$, up to isomorphism.
According to Koll\'ar 
\cite[Thm.~II.1.2/3]{kollar-book},  any irreducible component of 
$\overline{\mathcal{M}}_{0,0}(X,e)$ has dimension at least
\begin{equation}\label{eq:bar-mu}
\bar\mu=(n+1  - d)e + n - 4.
\end{equation}
Work of Harris, Roth and Starr \cite{HRS} shows that  
$\overline{\mathcal{M}}_{0,0}(X,e)$
 is an irreducible, local complete intersection scheme of dimension $\bar\mu$, provided that $X$
 is general and $d<\frac{1}{2}(n+1)$.  
The restriction on $d$ has since been weakened to 
$d<\frac{2}{3}(n+1)$ by  Beheshti and Kumar \cite{BK} (assuming that $n\geq 23$),
and then to $d\leq n-2$ by Riedl and Yang \cite{RY}.

In the setting   $d=3$  of cubic hypersurfaces it is possible to obtain 
results for all smooth hypersurfaces in the family. 
Thus Coskun and Starr \cite{CS} have shown 
that  $\overline{\mathcal{M}}_{0,0}(X,e)$ is irreducible  and of  dimension $\bar\mu$
for any smooth cubic hypersurface  $X\subset \PP^n$ over $\CC$, provided that   $n> 4$.
(If $n=4$ then $\overline{\mathcal{M}}_{0,0}(X,e)$ has two irreducible components of the expected dimension 
$\bar\mu=2e$.)

At the expense of a much stronger condition on the degree, our main result establishes the irreducibility and dimension of the space 
${\mathcal{M}}_{0,0}(X,e)$,  for an arbitrary  smooth hypersurface $X\subset \PP^n$ over $\CC$.
Let
\begin{equation}\label{eq:n0}
n_0(d)=2^{d-1}(5d-4).
\end{equation}
We shall prove the following statement. 

\begin{theorem}\label{thm:irred}
Let $X\subset \PP^{n}$ be a smooth  hypersurface of degree $d\geq 3$ defined over $\CC$, with 
$n\geq n_0(d)$. 
Then  for each $e\geq 1$ the space
${\mathcal{M}}_{0,0}(X,e)$ is irreducible and of the expected dimension.
\end{theorem}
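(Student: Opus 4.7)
The plan is to adopt the Ellenberg--Venkatesh strategy of translating the geometric statement into a point-counting problem over finite fields. Spreading $X$ out over a ring of $S$-integers $\cO$ in a number field, one reduces to considering the reductions $X_\fp$, which for a density-one set of primes $\fp$ are smooth hypersurfaces of degree $d$ in $\PP^n$ over $\FF_q$, with $q=\nf\fp$. It will then suffice to prove, uniformly in such $q$ and in $e$, an asymptotic formula of the form
\begin{equation*}
\#\bigl\{(\phi_0,\dots,\phi_n)\in\FF_q[t]^{n+1}\colon \deg\phi_i\le e,\ F(\phi_0,\dots,\phi_n)=0\bigr\}
= q^{(n+1-d)e+n}+O\bigl(q^{(n+1-d)e+n-\delta}\bigr)
\end{equation*}
for some $\delta>0$, where $F$ is the defining form of $X_\fp$. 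By Lang--Weil, such an estimate forces the complex affine variety of such tuples to be geometrically irreducible of the expected dimension $(n+1-d)e+n=\bar\mu+4$. Quotienting by the action of $\mathbb{G}_m \times \mathrm{PGL}_2$, which preserves irreducibility, then yields irreducibility and the expected dimension for $\mathcal{M}_{0,0}(X,e)$. A subsidiary step is to peel off the locus of non-primitive tuples, parameterising morphisms of smaller degree composed with covers of $\PP^1$; these can be handled inductively via a Mobius-type sieve over common factors of the $\phi_i$ and shown to lie in strictly lower dimension.

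The analytic core is a function field version of Birch's theorem for the single form $F$ of degree $d$ in $n+1$ variables, evaluated at $\FF_q[t]$-tuples of bounded degree. One expresses the indicator of $F(\phi)=0$ via orthogonality of additive characters on $\FF_q((1/t))/\FF_q[t]$ and decomposes the resulting integral over the unit circle into major and minor arcs by Dirichlet-type approximation in $\FF_q[t]$. On the major arcs, the main term factors as a product of local $\fp$-adic densities (the singular series) with an archimedean density (the singular integral), both of which are positive and tend to $1$ as $q\to\infty$, precisely because $X$ is smooth. The minor arcs are treated by Weyl differencing $d-1$ times. Smoothness of $X$ again enters here: the Birch singular locus $V^\ast=\{\nabla F=0\}$ is just the origin, so $\dim V^\ast=0$, which is the strongest possible case for Weyl's inequality. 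The condition $n\ge n_0(d)=2^{d-1}(5d-4)$ is exactly the threshold needed to make the resulting minor arc bound dominate the target error term uniformly in $e$ and $q$: the exponential factor $2^{d-1}$ is the standard Birch loss from $d-1$ rounds of differencing, while the linear factor $5d-4$ absorbs additional slack needed to beat the Lang--Weil error, to subsume the sieve over non-primitive tuples, and to maintain uniformity as $q$ varies.

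The main obstacle is establishing this function field Birch theorem with sufficient strength. Unlike the classical Birch setting, where one seeks an asymptotic as a height parameter tends to infinity, here one needs power-saving bounds for \emph{every} $e\ge 1$, with all implied constants explicit enough in $q$ that the final error term comfortably beats $q^{(n+1-d)e+n-1/2}$ at each scale. Ensuring convergence and positivity of the singular series uniformly in $q$, carefully executing the inclusion--exclusion to isolate primitive tuples without degrading the error, and propagating the resulting count from the affine cone down to $\mathcal{M}_{0,0}(X,e)$ are technical but largely standard once the minor arc bound is in place. It is the latter which forces the large condition $n\ge 2^{d-1}(5d-4)$ and which will constitute the bulk of the work.
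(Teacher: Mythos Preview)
Your overall strategy---spreading out, counting via the function field circle method, and concluding via Lang--Weil---is exactly that of the paper. The essential difference is that you aim for a full two-sided asymptotic with power saving, whereas the paper observes that only an \emph{upper} bound is needed. By Koll\'ar's deformation-theoretic result, every irreducible component of $\mor_e(\PP^1,X)$ already has dimension at least the expected $\mu$; so it suffices to show $\#\mor_e(\PP^1_{\FF_q},X)(\FF_{q^\ell})\le q^{\ell\mu}(1+o(1))$ as $\ell\to\infty$, which together with Lang--Weil forces a unique top-dimensional component of dimension exactly $\mu$.

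This simplification cascades. First, there is no need for your M\"obius sieve: the paper simply bounds the primitive count above by the naive count $M_e$ of all tuples (dropping coprimality), and this suffices for an upper bound. Second, there is no singular series or singular integral to analyse: the paper takes the major arc to be only the tiny region $r=1$, $|\theta|<|P|^{-d}q^{d-1}$, on which $\psi(\theta F(\x))\equiv 1$ trivially, so the major arc contribution is \emph{exactly} $q^{\hat\mu}$ with no error and no local densities to control. All the work is then in showing the minor arc contribution is $o(q^{\hat\mu})$ as $q\to\infty$, which is where the Weyl differencing with explicit $q$-dependence (your correctly identified ``main obstacle'') enters and where the threshold $n_0(d)=2^{d-1}(5d-4)$ arises. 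Your route would work, but carries avoidable baggage; the paper's one-sided formulation is cleaner and shorter.
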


The example of Fermat hypersurfaces, discussed in \cite[\S 1]{CS},
shows  that the analogous result for 
$\overline{\mathcal{M}}_{0,0}(X,e)$ is false when $d>3$ and $e$ is large enough.
When $e=1$ we have $\overline{\mathcal{M}}_{0,0}(X,1)=\mathcal{M}_{0,0}(X,1)=F_1(X)$, where $F_1(X)$ is 
the  Fano scheme of lines on $X$. 
It has been conjectured, independently 
by Debarre and de Jong, that $\dim F_1(X)=2n-d-3$ for any  
smooth Fano hypersurface $X\subset \PP^n$ of degree $d$.
Beheshti \cite{Beh} has confirmed this  for  $d\leq 8$. 
Taking $e=1$ in Theorem \ref{thm:irred}, 
we conclude that $\dim F_1(X)=2n-d-3$ for any    $d\geq 3$, provided that
 $n\geq  n_0(d)$.  

Our proof of Theorem \ref{thm:irred}  ultimately relies on techniques from analytic number theory. 
The first step is ``spreading out'', in the sense of Grothendieck \cite[\S 10.4.11]{EGAIV} (cf.\ Serre \cite{serre}), 
which will  take us to the analogous problem 
for smooth hypersurfaces defined over the algebraic closure of a finite field. 
Passing to a finite field $\FF_q$ of sufficiently large cardinality, 
for a smooth degree $d$ hypersurface $X\subset\PP_{\FF_q}^{n}$ defined over $\FF_q$,
the cardinality of $\FF_q$-points
on $\mathcal{M}_{0,0}(X,e)$ can be related to the number of $\FF_q(t)$-points on $X$ of degree  $e$.
We shall access the latter quantity through a function field version of the Hardy--Littlewood circle method. A comparison with 
the Lang--Weil estimate \cite{LW} then allows us to make deductions about the irreducibility and dimension
of $\mathcal{M}_{0,0}(X,e)$.

The idea of  using the circle method to study the moduli space of rational curves on varieties is due to  Ellenberg and Venkatesh.   
The traditional setting for the circle method is a fixed finite field $\FF_q$, with the goal being to understand the 
$\FF_q(t)$-points on $X$ of degree  $e$, as $e\rightarrow \infty$. 
This is the point of view taken in  work of Lee \cite{lee,lee'} 
on a $\FF_q(t)$-version of Birch's work on systems of forms in many variables.
In  contrast to this, we will be required to  handle any fixed $e\geq 1$, as $q\rightarrow \infty$.
Pugin developed  an ``algebraic circle method'' 
in his 2011 Ph.D. thesis \cite{pugin} 
to study the 
spaces 
$\mathcal{M}_{0,0}(X,e)$, when
$X\subset \PP_{\FF_q}^{n}$ is the diagonal cubic hypersurface
$$
a_0x_0^3+\dots+a_nx_n^3=0,
\quad (\text{for $a_0,\dots,a_n\in \FF_q^*$}).
$$
Assuming that   $n\geq 12$ and $\ch(\FF_q)> 3$, he succeeds in showing that 
the   space
$\mathcal{M}_{0,0}(X,e)$  is   irreducible  and of the expected dimension.
Our work, on the other hand,  applies  to arbitrary smooth hypersurfaces of sufficiently low degree, which are defined over the complex numbers.
Finally, our investigation bears comparison with work of Bourqui \cite{bourqui1, bourqui2}, who has also  investigated the moduli space of curves on  varieties using  counting arguments. In place of the circle method, however, Bourqui draws on the theory of  universal torsors.

\begin{ack}
The authors are grateful to 
Hamid Ahmadinezhad,  
Lior Bary Soroker, 
Roya Beheshti, 
Emmanuel Peyre 
and the anonymous referee 
for useful  conversations. 
While working on this paper the first  author was
supported by ERC grant \texttt{306457}.
\end{ack}

\section{Spreading out}

Let $X\subset \PP^n$ be a smooth hypersurface of degree $d$, defined by a homogeneous polynomial 
$$
F(x_0,\dots,x_n)=\sum_{\substack{\mathbf{i}\in \ZZ_{\geq 0}^{n+1}\\ i_0+\dots+i_n=d}} c_{\mathbf{i}} 
x_0^{i_0}\dots x_n^{i_n},
$$
with coefficients $c_{\mathbf{i}}\in \CC$.
Rather than working with 
$\mathcal{M}_{0,0}(X,e)$, 
it will suffice to study the naive space 
$\mor_e(\PP^1, X)$ of actual maps $\PP^1\to X$ of degree $e$. 
The expected dimension of 
$\mor_e(\PP^1, X)$ is $\mu=\bar\mu+3$, 
where $\bar\mu$ is given by \eqref{eq:bar-mu}, 
since $\PP^1$ has automorphism group of dimension $3$.
We proceed to recall the construction of 
$\mor_e(\PP^1, X)$.

Let  $G_e$ be the set of all homogeneous polynomials in $u,v$ of degree $e\geq 1$, with coefficients in $\CC$.
A {\em rational curve} of degree $e$ on $X$ is a non-constant morphism $f: \PP_\CC^1\rightarrow X$ of degree $e$. It is 
given by 
$$
f=(f_0(u,v),\dots,f_n(u,v)),
$$
with $f_0,\dots,f_n\in G_e$, with no non-constant common factor in $\CC[u,v]$, such that $F(f_0(u,v),\dots,f_n(u,v))$ vanishes  identically.
We may regard $f$ as a point in $\PP_\CC^{(n+1)(e+1)-1}$ and 
the morphisms of degree $e$ on $X$ are parameterised by 
$\mor_e(\PP_\CC^1,X)$, which is an open subvariety of 
$\PP_\CC^{(n+1)(e+1)-1}$ cut out by a system of $de+1$ equations of degree $d$.
In this way we obtain the expected dimension
$$
(n+1)(e+1)-1-(de+1)=(n+1-d)e+n-1=\mu,
$$
of  
$\mor_e(\PP_\CC^1,X)$.
It follows from   Koll\'ar \cite[Thm.~II.1.2]{kollar-book} that  all irreducible components 
of $\mor_e(\PP_\CC^1, X)$ 
have  dimension at least $\mu$.
In order to  establish Theorem~\ref{thm:irred} it will therefore suffice to show that 
$\mor_e(\PP_\CC^1,X)$  is irreducible, with 
$\dim\mor_e(\PP_\CC^1,X)\leq \mu,$
provided that $n\geq   n_0(d)$.

The complement to 
$\mor_e(\PP_\CC^1,X)$ in its closure is the set of $(f_0,\dots,f_n)$ with a common zero.
We can obtain explicit equations for 
$\mor_e(\PP_\CC^1,X)$  by noting that 
$f_0,\dots,f_n$ have a common zero if and only if the resultant 
$\mathrm{Res}(\sum_i \lambda_i f_i, \sum_j \mu_j f_j)$ is identically zero as a polynomial in $\lambda_i,\mu_j$. 
It is clear that both $X$ and $\mor_e(\PP_\CC^1,X)$ are defined by equations 
with coefficients belonging to the 
finitely generated $\ZZ$-algebra
$\Lambda=\ZZ[c_{\mathbf{i}}]$,
obtained by adjoining the coefficients of $F$ to $\ZZ$. 
In this way we may view $X$ and 
 $\mor_e(\PP_\CC^1,X)$ as schemes over $\Lambda$, with structure morphisms
$ X\to \Spec \Lambda$ and 
$$
  \mor_e(\PP_\CC^1,X)\to \Spec \Lambda.
$$
By Chevalley's upper semicontinuity theorem (see \cite[Thm.~13.1.3]{EGAIV}), 
there exists a non-empty open set 
$U$ of $\Spec \Lambda$ such that 
$$
\dim \mor_e(\PP_\CC^1,X)\leq \dim
\mor_e(\PP_\CC^1,X)_{\mathfrak{m}}
$$
for any closed point  $\mathfrak{m}\in U$. Here  $\mor_e(\PP_\CC^1,X)_\mathfrak{m}$ denotes the fibre above  $\mathfrak{m}$, which is obtained  via the base change  $\Spec \Lambda/\mathfrak{m}\to \Spec \Lambda$.
Likewise, since integrality
is an open condition, 
the space 
$\mor_e(\PP_{\CC}^1,X)$ will be irreducible if  
$\mor_e(\PP_\CC^1,X)_{\mathfrak{m}}$ is.
%T%[need reference!]
%we will have 
%$$
%\# \{\text{irred.\ comp.\ of $\mor_e(\PP_{\CC}^1,X)$}\} \leq 
%\# \{\text{irred.\ comp.\ of $\mor_e(\PP_\CC^1,X)_{\mathfrak{m}}$}\}.
%$$
%By the open  nature of flatness (see \cite[Exercises~III.9.1 and III.9.3]{hart}) there exists an 
%open set $U$
%of $ \mor_e(\PP^1,X_{\CC})$ such that $g|_U: U\to \Spec \Lambda$ is flat.
%We pick a 
%maximal ideal 
%$\mathfrak{m}$ in the non-empty open set $g(U)\subset \Spec \Lambda$.
%Since the dimension is constant on the fibres of a flat morphism (need a reference for schemes over $\Lambda$!) 

Choose a maximal ideal $\mathfrak{m}$ in $U$.
The quotient  $\Lambda/\mathfrak{m}$ is a finite field by arithmetic weak Nullstellensatz. 
By enlarging $\Lambda$,  we may assume that it contains $1/d!$. In particular, it follows that 
$\ch(\Lambda/\mathfrak{m})=p$, say, with $p>d$, since any prime less than or equal to $d$ is invertible in $\Lambda$.
The quasi-projective  varieties  $X_\mathfrak{m}$ and 
$\mor_e(\PP_\CC^1,X)_\mathfrak{m}$ are defined over $\bar \FF_p$, being 
given explicitly  by 
 reducing modulo $\mathfrak{m}$ the coefficients of  the original system of defining equations.
By further enlarging $\Lambda$, if necessary, we may assume that $X_{\mathfrak{m}}$ is smooth.
There exists a finite field $\FF_{q_0}$ such that $X_\mathfrak{m}$ and 
$\mor_e(\PP_\CC^1,X_{\CC})_{\mathfrak{m}}$ are both 
defined over $\FF_{q_0}$. In view of the Lang--Weil estimate,  Theorem \ref{thm:irred} is a direct consequence of the  following result, together with  the fact that $\mor_e(\PP_\CC^1,X_{\CC})_{\mathfrak{m}}$ 
is non empty in the cases under consideration. 

\begin{theorem}\label{thm:LW}
Let  
$n\geq   n_0(d)$
and let 
 $X\subset \PP_{\FF_q}^{n}$ be a smooth hypersurface of degree $d\geq 3$ defined over a finite field $\FF_q$, with $\ch(\FF_q)>d$.
Then  for  each $e\geq 1$ we have 
$$
\lim_{\ell\to \infty}q^{-\ell \mu}\#\mor_e(\PP_{\FF_q}^1, X)(\FF_{q^\ell})\leq 1.
$$
\end{theorem}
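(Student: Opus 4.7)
My plan is to count the $\FF_{q^\ell}$-points of $\mor_e(\PP^1, X)$ directly via a function-field circle method, exploited in the regime where $e$ is fixed and $\ell \to \infty$, and to control the error using a Birch-type Weyl inequality enabled by the smoothness of $X$. To begin, I would parametrize $\mor_e(\PP_{\FF_q}^1, X)(\FF_{q^\ell})$ as the quotient by $\FF_{q^\ell}^*$-scaling of the set of tuples $(f_0, \ldots, f_n) \in G_e(\FF_{q^\ell})^{n+1}$ with no common zero on $\PP^1$ and with $F(f_0, \ldots, f_n) \equiv 0$ identically in $\FF_{q^\ell}[u,v]$. Dropping the coprimality condition only enlarges the set, so it suffices to show
\[
N_\ell := \#\bigl\{ (f_0, \ldots, f_n) \in G_e(\FF_{q^\ell})^{n+1} : F(f_0, \ldots, f_n) \equiv 0 \bigr\} \leq q^{\ell(\mu+1)}(1 + o(1))
\]
as $\ell \to \infty$, and then divide by $q^\ell - 1$.

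Since $F(f_0, \ldots, f_n)$ is a polynomial in $u, v$ of degree $de$, its identical vanishing amounts to $de+1$ linear conditions on its coefficients. Orthogonality of additive characters $\psi$ of $\FF_{q^\ell}$ then gives
\[
N_\ell = q^{-\ell(de+1)} \sum_{\bal \in \FF_{q^\ell}^{de+1}} S(\bal), \qquad S(\bal) = \sum_{(f_0, \ldots, f_n) \in G_e(\FF_{q^\ell})^{n+1}} \psi\bigl(\langle \bal, F(f_0, \ldots, f_n)\rangle\bigr),
\]
where $\bal$ is dual to the coefficient vector of $F(f)$. The frequency $\bal = \0$ contributes exactly $q^{\ell((n+1)(e+1) - (de+1))} = q^{\ell(\mu+1)}$, the expected main term, so the task reduces to showing that the non-trivial frequencies together contribute strictly less in the limit.

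The heart of the argument is a Birch-type Weyl bound for $S(\bal)$ with $\bal \neq \0$. I would iterate Cauchy--Schwarz and differencing $d-1$ times, translating each $f_i$ by another element of $G_e$ rather than by a single-coefficient shift, so that each round lowers the $f$-degree of $F$ in concert with its joint $(u,v)$-degree. After the differencings, the resulting sum is controlled by the codimension of the variety on which the symmetric multilinear form attached to $F$ degenerates; smoothness of $X$ (the non-vanishing of $\nabla F$ off the origin) forces that variety to have high codimension, yielding a Weyl-type bound of the shape
\[
\max_{\bal \neq \0} |S(\bal)| \ll q^{\ell((n+1)(e+1) - \sigma(n,d))}
\]
with $\sigma(n,d) > de + 1$ (with positive margin) whenever $n \geq n_0(d) = 2^{d-1}(5d-4)$. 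Summing this over the $q^{\ell(de+1)}$ non-zero $\bal$ then defeats the main term by a positive power of $q^\ell$, giving $N_\ell = q^{\ell(\mu+1)}(1 + o(1))$ and hence the theorem after dividing by $q^\ell - 1$. The main obstacle is to prove this Weyl inequality with a threshold as low as $2^{d-1}(5d-4)$, rather than the $\asymp d^2 \cdot 2^d$ one would obtain by applying Birch's theorem blindly to the system of $de+1$ forms of degree $d$ in $(n+1)(e+1)$ variables: crucially, all these forms arise from the single polynomial $F$ in $n+1$ variables, and differencing in the polynomial variable rather than coefficient-wise is what reduces the geometric input to the singular locus of $X$ in $\PP^n$, which is empty by smoothness.
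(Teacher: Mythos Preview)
Your framework matches the paper's: both count $\FF_{q^\ell}$-points on the naive cone by orthogonality of characters, identify $\bal=\0$ as the main term, and attack the rest via Weyl differencing along the polynomial variable, so that the geometric input is the codimension of $\{\Psi_i=0\}$ coming from smoothness of $X$. The gap is the uniform bound you assert. Weyl differencing by itself does \emph{not} give a single $\sigma(n,d)>de+1$ valid for every $\bal\neq\0$: after $d-1$ differencings one is left with a count $N(\bal)$ of $\underline{\mathbf u}$ satisfying conditions of the shape $\|\bal\cdot\Psi_i(\underline{\mathbf u})\|<|P|^{-1}$, and converting this into a pure variety count $\{\Psi_i=0\}$ requires a shrinking step whose efficiency depends on the Diophantine type of $\bal$. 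The paper makes this precise (Lemma~5.1 followed by Lemmas~5.4--5.6), and the resulting pointwise bound is $|S(\bal)|\ll |P|^{n}q^{-L(e+1)\eta}$ with $L=n/2^{d-1}$ and $\eta\in[0,1)$ depending on how well $\bal$ is approximated by a rational $a/r$. For $\bal$ near a rational with small $|r|$ one only gets $|S(\bal)|\ll |P|^{n}q^{-L}$; this is $o(q^{\mu+1})$ only when $L>de+1$, i.e.\ $n>2^{d-1}(de+1)$, which for $e\geq 5$ is already stronger than $n_0(d)=2^{d-1}(5d-4)$. So a uniform bound of your claimed strength cannot be extracted from Weyl differencing alone once $e$ is moderately large.

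What the paper does instead is a Farey dissection (Dirichlet approximation in $K_\infty$), stratifying the $\bal$'s by $|r|=q^\alpha$ and $|\theta|=q^{-\beta}$. On each stratum one has a pointwise Weyl bound \emph{and} a bound on the measure of the stratum, $q^{2\alpha-\beta+1}$; the whole of \S6 is a case analysis balancing these two against each other. For small $|r|$ the exponential sum bound is weak but the arc is tiny; for large $|r|$ the shrinking lemma pushes $\eta$ close to $1$ and the sum bound is strong. It is precisely this trade-off that produces the $e$-independent threshold $2^{d-1}(5d-4)$. Your proposal skips both the shrinking lemma and the Farey stratification, and without them the argument does not close for all $e\geq 1$ at the stated value of $n_0(d)$.
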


\section{The Hardy--Littlewood circle method}

We now initiate the proof of Theorem \ref{thm:LW}.
We henceforth redefine $q^\ell$ to be $q$ and we replace $n$ by $n-1$ in the statement of the theorem. In particular the expected dimension is now
$\mu=(n-d)e+n-2$.
Our proof of Theorem~\ref{thm:LW} is based on a version of the Hardy--Littlewood circle method for the function field $K=\FF_q(t)$,
always under the assumption that 
$\ch(\FF_q)>d$. 
The main input for this  comes from work of Lee \cite{lee, lee'}, combined with our 
own recent contribution to the subject,  in the setting of cubic forms  \cite{BV}.

We begin by laying down some basic notation and terminology.
To begin with, for any real number $R$ we set $\hat R=q^R$.
Let 
$\mathcal{O}=\FF_q[t]$ be the ring of integers of $K$ and 
let $\Omega$ be the set of  places of $K$. These correspond to either monic irreducible polynomials $\varpi$ in $\mathcal{O}$, which we call the {\em finite primes},  or the {\em prime at infinity} $t^{-1}$ which we usually denote  by $\infty$. 
The associated absolute value  $|\cdot|_v$ is either $|\cdot|_\vp$ for some prime $\vp\in \mathcal{O}$ or $|\cdot|$, according to whether $v$ is a finite or infinite place, respectively. 
These  are given by 
$$
|a/b|_\vp=\left(\frac{1}{q^{\deg \vp}}\right)^{\ord_\vp(a/b)} \quad \text{ and }\quad
|a/b|=
q^{\deg a-\deg b},
$$
for any $a/b\in K^*$.
We extend these definitions to  $K$ by taking $|0|_\vp=|0|=0.$

For $v\in \Omega$ we let $K_v$ denote the completion of $K$ at $v$ with respect to $|\cdot|_v$. 
We may identify $K_\infty$ with the set 
$$
\FF_q((1/t))=\left\{\sum_{i\leq N}a_it^i: \mbox{for $a_i\in \FF_q$ and some $N\in\ZZ$} \right\}.
$$
We can extend the absolute value at the infinite place to $K_\infty$ to get  a non-archimedean
absolute value 
$|\cdot|:K_\infty\rightarrow \RR_{\geq 0}$ 
given by $|\alpha|=q^{\ord \alpha}$, where $\ord \alpha$ is the largest $i\in
\ZZ$ such that $a_i\neq 0$ in the 
representation $\alpha=\sum_{i\leq N}a_it^i$.
In this context we adopt the convention $\ord 0=-\infty$ and $|0|=0$.
We extend this to vectors by setting 
$
|\x|=\max_{1\leq i\leq n}|x_i|, 
$
for any $\x\in K_\infty^n$. 

Next, we  put 
$$
\TT=\{\alpha\in K_\infty: |\alpha|<1\}=\left\{\sum_{i\leq -1}a_it^i: \mbox{for $a_i\in \FF_q$}
\right\}.
$$
Since $\TT$ is a locally compact
additive subgroup of $K_\infty$ it possesses a unique Haar measure $\d
\alpha$, which is normalised so that 
$
\int_\TT \d\alpha=1.
$
We can extend $\d\alpha$ to a (unique) translation-invariant measure on $\Ki$, in
such a way that 
$$
\int_{\{\alpha\in\Ki: 
|\alpha|<\hat N
\}} \d \alpha=\hat N,
$$ 
for any $N\in \NN$.
These measures also extend to $\TT^n$ and $\Ki^n$, for any $n\in \NN$.
There is a non-trivial additive character $e_q:\FF_q\rightarrow \CC^*$ defined
for each $a\in \FF_q$ by taking 
$e_q(a)=\exp(2\pi i \tr_{\FF_q/\FF_p}(a)/p)$.
This character yields a non-trivial (unitary) additive character $\psi:
K_\infty\rightarrow \CC^*$ by defining $\psi(\alpha)=e_q(a_{-1})$ for any 
$\alpha=\sum_{i\leq N}a_i t^i$ in $\Ki$.

Let $F\in \FF_q[\x]$ be a non-singular form of degree $d\geq 3$,
with 
$\x=(x_1,\dots,x_n)$. We may express this polynomial as
$$
F(\x)
=\sum_{i_1,\dots,i_d=1}^n c_{i_1,\dots,i_d} 
x_{i_1}\dots x_{i_d},
$$
with coefficients $c_{i_1,\dots,i_d} \in \FF_q$.
 In particular $F$ and the discriminant 
 $\Delta_F$ are non-zero, or equivalently,   
$\max_{\mathbf{i}}|c_{\mathbf{i}}|=1$ 
and 
$|\Delta_F|=1.$
We will make frequent use of these facts in what follows. 
Associated to $F$ are the multilinear forms
\begin{equation}\label{eq:multi}
\Psi_i(\x^{(1)},\dots,\x^{({d-1})})
=\sum_{i_1,\dots,i_{d-1}=1}^n c_{i_1,\dots,i_{d-1},i} 
x_{i_1}^{(1)}\dots x_{i_{d-1}}^{(d-1)},
\end{equation}
for $1\leq i\leq n$.

To establish Theorem~\ref{thm:LW}  we work with the  naive space
$$
M_e=\left\{ \x=(x_1,\dots,x_n)\in G_e(\FF_q)^{n}\setminus\{\0\} : F(\x)=0\right\},
$$
where $G_e(\FF_q)$ is the set of binary forms of degree $e$ with coefficients in $\FF_q$. Thus $M_e$
 corresponds to the  $\FF_q$-points on the affine cone  of 
 $\mor_e(\PP_{\FF_q}^1,X)$, where we drop the condition  
 that $x_1,\dots,x_n$ share no common  factor. 
Let us set 
\begin{equation}\label{eq:hat-mu}
\begin{split}
\hat \mu=\mu+1
&=(n-d)e+n-1=(e+1)n-de-1.
\end{split}
\end{equation}
It will clearly suffice to show that 
\begin{equation}\label{eq:goal}
\lim_{q\to \infty}q^{-\hat \mu}\#M_e\leq 1,
\end{equation}
for $n> n_0(d)$, where $n_0(d)$ is given  by  \eqref{eq:n0}.
We proceed by relating  $\#M_e$ to the counting function that lies at the heart of our earlier investigation \cite{BV}.

Let $w:K_\infty^n\to \{0,1\}$ be given by
$w(\x)=
\prod_{1\leq i\leq n} w_\infty(x_i),
$
where 
$$
w_\infty(x)=
\begin{cases}
1, &\text{if $|x|<1$,}\\
0, & \text{otherwise}.
\end{cases}
$$
Putting $P=t^{e+1}$, we then have
$
\#M_e\leq N(P),
$
where
$$
N(P)=\sum_{\substack{\x\in \cO^n\\ F(\x)=0}} w(\x/P).
$$
It follows from \cite[Eq.~(4.1)]{BV} that for any $Q\geq 1$ we have 
\begin{equation}\label{eq:goat}
N(P)=
\sum_{\substack{
r\in \cO\\
|r|\leq \hat Q\\
\text{$r$ monic}
} }
\sumstar_{\substack{
|a|<|r|} }
\int_{|\theta|<|r|^{-1}\hQ^{-1}} S\left(\frac{a}{r}+\theta\right) \d \theta,
\end{equation}
where 
$
\sum^*$ means that the sum is taken over residue classes 
$|a|<|r|$ for which  $\gcd(a,r)=1$, and where
\begin{equation}\label{eq:SUM}
S(\alpha)=\sum_{\substack{\x\in \cO^n}} \psi(\alpha F(\x))w(\x/P),
\end{equation}
for any $\alpha\in \TT$.
We will work with the choice $Q=d(e+1)/2$, so that 
$
\hat Q=|P|^{d/2}.
$

The {\em major arcs} for our problem are given by $r=1$ and $|\theta |<|P|^{-d} q^{d-1}$. 
We let the {\em minor arcs} be everything else: i.e.\ those $\alpha=a/r+\theta$ 
appearing in \eqref{eq:goat} for which 
either $|r|>q$, or else $r=1$ and 
$|\theta |\geq |P|^{-d} q^{d-1}$. 
The contribution $N_{\text{major}}(P)$ from the major arcs is easy to deal with.
Indeed, for $|\theta|<|P|^{-d} q^{d-1}$ and $|\x|<|P|$ we have 
$|\theta F(\x)|<|P|^{-d}q^{d-1}q^{de}=q^{-1}$, whence $\psi(\theta F(\x))=1$. Thus $S(\alpha)=|P|^n$, for $\alpha=\theta$ belonging to the major arcs, whence
$$
N_{\text{major}}(P)=|P|^n \int_{|\theta|<|P|^{-d}q^{d-1}}\d \theta = |P|^{n-d}q^{d-1}=q^{\hat \mu}.
$$
In order to prove \eqref{eq:goal}, it therefore remains to show that 
\begin{equation}\label{eq:cup}
\lim_{q\to\infty} q^{-\hat \mu}N_{\text{minor}}(P)=0,
\end{equation}
for $n>n_0(d)$, where 
$N_{\text{minor}}(P)$ is the overall contribution to \eqref{eq:goat} from the minor arcs.
This will complete  the proof of Theorem \ref{thm:LW}.

\section{Geometry of numbers in function fields}

The purpose of this section is to record a technical result about lattice point counting over $K_\infty$.
A {\em lattice} in $K_\infty^N$ is a set of points of the form $\x=\Lambda \u$, where $\Lambda$ is a $N\times N$ matrix over $K_\infty$ and $\u$ runs over elements of $\cO^N$. By an abuse of notation we will also denote the set of such points by $\Lambda$.
Given a lattice $M$, the {\em adjoint lattice} $\Lambda$ is defined to satisfy $\Lambda^T M=I_N$, where $I_N$ is the $N\times N$ identity matrix. 

Let $\gamma=(\gamma_{ij})$ 
be a symmetric $n\times n$ matrix with entries in $K_\infty$. Given any positive integer $m$, we  define the special lattice
$$
M_m=\matr{t^{-m}I_n}{0}{t^m\gamma}{t^mI_n},
$$
with corresponding adjoint lattice
$$
\Lambda_m =\matr{t^{m}I_n}{-t^m\gamma}{0}{t^{-m}I_n}.
$$
Let $\hat R_1,...,\hat R_{2n}$ denote the successive minima of the
lattice corresponding to $M_m$.
For any vector $\x\in K_\infty^{2n}$ let $\x_1=(x_1,\dots,x_{n})$ and $\x_2=(x_{n+1},\dots,x_{2n})$.
We claim that $M_m$ and $\Lambda_m$ can be identified with one 
another. Now $M_m$ is the set of points $\x=M_m\u$ where $\u=(\u_1,\u_2)$ runs over elements of $\cO^{2n}$. Likewise,
 $\Lambda_m$ is the set of points $\y=\Lambda_m\v$ where $\v=(\v_1,\v_2)$ runs over elements of $\cO^{2n}$. 
We can therefore identify $M_m$ with $\Lambda_m$ through the process of changing the sign of $\v_2$, then the sign of $\y_2$, then switching $\v_1$ with $\v_2$, and finally interchanging $\y_1$ and $\y_2$.
It now follows from \cite[Lemma 3.3.6]{lee} (cf.~\cite[Lemma B.6]{lee'}) 
that 
\begin{equation}\label{eq:cime}
R_\nu+R_{2n-\nu+1}=0,
\end{equation}
for $1\leq \nu\leq n$.
An important step in the proof of \cite[Lemma~3.3.6]{lee} 
(cf.~\cite[Lemma B.6]{lee'}) 
 is a non-archimedean version of Gram--Schmidt orthogonalisation, which is used without reference in the  proof of \cite[Lemma 3.3.3]{lee}
 (cf.~\cite[Lemma B.3]{lee'}). 
This deficit is remedied by appealing to recent work of Usher and Zhang \cite[Theorem 2.16]{UZ}.

For any $Z\in \RR$ and any lattice $\Gamma$ we define the counting function
\begin{align*}
\Gamma(Z)= \#\{\x\in\Gamma:| \x|< \hat Z\}.
\end{align*}
Note that $\Gamma(Z)=\Gamma(\lceil Z \rceil)$ for any $Z\in \RR$.
We proceed to establish the following inequality.
\begin{lemma}
\label{lem:M ratio}
 Let $m, Z_1,Z_2\in \ZZ$ such that $Z_1\leq Z_2\leq 0$. Then we have 
 \begin{align*}
  \frac{M_m(Z_1)}{M_m(Z_2)}\geq \left(\frac{\hat Z_1}{\hat Z_2}\right)^n.
 \end{align*}
\end{lemma}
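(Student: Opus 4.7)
The plan is to use the non-archimedean structure of $K_\infty$ to compute $M_m(Z)$ exactly via a $K_\infty$-orthogonal basis, and then bound the resulting ratio using the self-duality encoded in~\eqref{eq:cime}. Let $\b_1,\dots,\b_{2n}$ be a basis of $M_m$ with $|\b_i|=\hat R_i$ and $|\sum_i c_i\b_i|=\max_i|c_i|\hat R_i$ for all $c_i\in K_\infty$; such a basis is furnished by the non-archimedean Gram--Schmidt orthogonalisation of \cite[Theorem~2.16]{UZ}, exactly as in the derivation of~\eqref{eq:cime}. Writing $\x=\sum_i c_i\b_i$ with $c_i\in\cO$, the condition $|\x|<\hat Z$ decouples coordinate-wise into $|c_i|<q^{Z-R_i}$, so counting polynomials of bounded degree yields the exact formula
\begin{equation*}
M_m(Z)=\prod_{i=1}^{2n}\#\{c\in\cO:|c|<q^{Z-R_i}\}=\prod_{i=1}^{2n}q^{\max(Z-R_i,\,0)}.
\end{equation*}

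Next I would form the ratio and analyse each factor separately. For a fixed $i$, the exponent $\max(Z_1-R_i,0)-\max(Z_2-R_i,0)$ equals $0$ when $R_i\geq Z_2$; equals $R_i-Z_2$, which is $\geq Z_1-Z_2$, when $Z_1\leq R_i<Z_2$; and equals $Z_1-Z_2$ when $R_i<Z_1$. Summing over all $i$ and recalling that $Z_1-Z_2\leq 0$, one obtains
\begin{equation*}
\frac{M_m(Z_1)}{M_m(Z_2)}\geq q^{k(Z_1-Z_2)}=\left(\frac{\hat Z_1}{\hat Z_2}\right)^{\! k},
\qquad k:=\#\{i:R_i<Z_2\}.
\end{equation*}
Because $\hat Z_1/\hat Z_2\leq 1$, it now suffices to prove $k\leq n$.

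This last step is the crux of the argument and is precisely where the symmetry \eqref{eq:cime} enters. I would pair the indices $\{1,\dots,2n\}$ into the $n$ pairs $(\nu,2n-\nu+1)$; within each pair, the minima satisfy $R_\nu+R_{2n-\nu+1}=0$, so at least one of them is $\geq 0\geq Z_2$. Consequently at most one index in each pair contributes to $k$, forcing $k\leq n$. The conceptual point, and the place where difficulty could hide, is that without the self-duality of $M_m$ identified with $\Lambda_m$ earlier in the section, one would only have the trivial bound $k\leq 2n$, giving the wrong exponent; everything else is bookkeeping built on the orthogonal basis, and I anticipate no further obstacle.
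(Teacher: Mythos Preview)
Your proof is correct and follows essentially the same route as the paper's. The paper cites \cite[Lemma~3.3.5]{lee} for the explicit formula $M_m(Z)=\prod_i q^{\max(Z-R_i,0)}$ (packaged as a case-by-case expression for the ratio) rather than deriving it from the orthogonal basis as you do, and then uses \eqref{eq:cime} to deduce that the index $\nu$ with $R_\nu<Z_2\leq R_{\nu+1}$ satisfies $\nu\leq n$, which is exactly your bound $k\leq n$; the remaining bookkeeping is identical.
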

\begin{proof}
Let $1\leq\mu,\nu\leq 2n$ be such that $R_{\mu}< Z_1\leq R_{\mu+1}$ and $R_{\nu}< Z_2\leq R_{\nu+1} $. 
Since $R_j$ is a non-decreasing sequence which satisfies $R_j+R_{2n-j+1}=0$, 
by \eqref{eq:cime}, 
we must  have $0\leq R_{n+1}$, whence  in fact $\mu\leq \nu\leq n$.   
It  follows from 
\cite[Lemma~3.3.5]{lee}  
(cf.~\cite[Lemma B.5]{lee'}) 
that
  \begin{align*}\frac{M_m(Z_1)}{M_m(Z_2)}=
    \begin{cases}
1 & \textrm{ if }Z_1,Z_2<R_1, \\
\left(\prod_{j=1}^\nu\hat R_j/\hat{Z_1}\right)(\hat{Z_1}/\hat{Z_2})^{\nu} & \textrm{ if }Z_1<R_1\leq Z_2, \\
\left(\prod_{j=\mu+1}^\nu\hat R_j/\hat{Z_1}\right)(\hat{Z_1}/\hat{Z_2})^{\nu}  & \textrm{ if }R_1\leq Z_1\leq Z_2 ,
\end{cases}
  \end{align*}
The statement of the lemma is now obvious.
\end{proof}

As above, let $\gamma=(\gamma_{ij})$ be a symmetric $n\times n$ matrix with entries in $K_\infty$. 
For $1\leq i\leq n$  we introduce the  linear forms 
$$
 L_i(u_1,\dots,u_n)=\sum_{j=1}^n \gamma_{ij}u_j.
$$
Next, for given real numbers $a, Z$, we let $N(a,Z)$ denote the number of vectors $(u_1,\dots,u_{2n})\in
\cO^{2n}$ such that 
\begin{align*}
  |u_j|<\hat{a}\hat{Z}\quad \textrm{ and }\quad |L_j(u_1,\dots,u_n)+u_{j+n}|<\frac{\hat{Z}}{\hat{a}}\quad \text{ for $1\leq j\leq n$}.
\end{align*}
If we put $m=\lfloor a\rfloor$, then it is clear that 
$$
M_m(Z-\{a\})\leq 
N(a,Z)\leq M_m(Z+\{a\}),
$$
where $\{a\}$ denotes the fractional part of $a$.
The following result is a direct consequence of Lemma 
\ref{lem:M ratio}.

\begin{lemma}\label{lem:cape}
 Let $a, Z_1,Z_2\in \RR$ with  $Z_1\leq Z_2\leq 0$. Then we have 
 \begin{align*}
 \frac{N(a,Z_1)}{N(a,Z_2)}\geq   \hat K^n,
 \end{align*}
where 
$
K=\lceil Z_1-\{a\}\rceil-\lceil Z_2+\{a\}\rceil.
$
\end{lemma}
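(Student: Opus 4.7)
The plan is to deduce the statement from Lemma \ref{lem:M ratio} via the sandwich
$$
M_m(Z - \{a\}) \leq N(a, Z) \leq M_m(Z + \{a\})
$$
recorded just before the statement (with $m=\lfloor a\rfloor$). Combining the lower bound at $Z=Z_1$ with the upper bound at $Z=Z_2$ gives
$$
\frac{N(a, Z_1)}{N(a, Z_2)} \geq \frac{M_m(Z_1 - \{a\})}{M_m(Z_2 + \{a\})}.
$$

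Since $M_m(Z)=M_m(\lceil Z\rceil)$, I can replace the arguments on the right-hand side by their ceilings. Set $W_1 = \lceil Z_1-\{a\}\rceil$ and $W_2 = \lceil Z_2+\{a\}\rceil$, so that $W_1-W_2=K$ by definition. The hypotheses $Z_1\leq Z_2\leq 0$ and $\{a\}\in [0,1)$ force $W_1\leq W_2$. Lemma \ref{lem:M ratio} then yields
$$
\frac{M_m(W_1)}{M_m(W_2)}\geq \left(\frac{\hat W_1}{\hat W_2}\right)^n=\hat K^n,
$$
which is the required inequality.

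The single delicate point is verifying the remaining hypothesis $W_2\leq 0$ of Lemma \ref{lem:M ratio}: the shift by $\{a\}<1$ only yields $Z_2+\{a\}<1$ and hence $W_2\leq 1$. I expect to dispatch the borderline case $W_2=1$ directly, appealing to the symmetry $R_j+R_{2n-j+1}=0$ from \eqref{eq:cime} (which forces $R_{n+1}\geq 0$) and noting that the formula in the proof of Lemma \ref{lem:M ratio} continues to hold so long as $W_2\leq R_{n+1}$. Equivalently, one can split into the cases $Z_2+\{a\}\leq 0$ and $Z_2+\{a\}>0$, the latter leaving only $O(1)$-many $Z_2$ to treat by hand. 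This is the only non-routine check; everything else is a formal translation of the preceding lattice-point bound.
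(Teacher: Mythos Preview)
Your approach is precisely what the paper has in mind: it states the result as ``a direct consequence of Lemma~\ref{lem:M ratio}'' via the sandwich $M_m(Z-\{a\})\le N(a,Z)\le M_m(Z+\{a\})$, and your derivation through $W_1,W_2$ is exactly this. So the main line of argument is correct and matches the paper.

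You are also right to flag the borderline case $W_2=1$, which the paper does not address. However, your proposed fix is not quite complete: the claim that ``the formula \dots\ continues to hold so long as $W_2\le R_{n+1}$'' is true, but $W_2=1\le R_{n+1}$ is \emph{not} guaranteed, since \eqref{eq:cime} only gives $R_{n+1}\ge 0$ and the successive minima are integers, so $R_{n+1}=0$ is possible. The clean way out is to observe that in every application of this lemma in the paper (the proofs of Lemmas~\ref{lem:3.5} and~\ref{lem:3.5 e=1}) one arranges $a\in\ZZ$, so that $\{a\}=0$, $W_2=\lceil Z_2\rceil=Z_2\le 0$, and the hypothesis of Lemma~\ref{lem:M ratio} is met directly. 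In effect the lemma is only needed (and only cleanly true as stated) for integer $a$; you could simply add that hypothesis.
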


\section{Weyl differencing}

In everything that follows we shall 
assume that $\ch(\FF_q)>d$ and we will allow all our implied constants to depend at most on $d$ and $n$. 
This section is concerned with a careful analysis of  the exponential sum  
 \eqref{eq:SUM}, using
the function field version of  Weyl differencing that was worked out by Lee \cite{lee,lee'}. 
Our task is  to make the dependence on $q$ completely explicit and it turns out that gaining satisfactory control requires considerable care.
Since we are  concerned with hypersurfaces one needs to take $R=1$  in  \cite{lee, lee'}.  

For any $\beta=\sum_{i\leq N}b_i t^i \in K_\infty$, 
we let $\|\beta\|=|\sum_{i\leq -1}b_it^i|$.
Recalling the definition \eqref{eq:multi} of the multilinear forms associated to $F$, we
let 
\begin{equation}\label{eq:defineN}
N(\alpha)=
\#\left\{ \underline{\u}\in \cO^{(d-1)n}: 
\begin{array}{l}
|\u_1|,\dots,|\u_{d-1}|<|P|\\
\|\alpha \Psi_i(\underline{\u}) \|<|P|^{-1} ~(\forall   i\leq n)
\end{array}{}
\right\},
\end{equation}
where $\underline{\u}=(\u_1,\dots,\u_{d-1})$.
We begin with an application of \cite[Cor.~4.3.2]{lee}
(cf.~\cite[Cor.~3.3]{lee'}), 
which leads to the inequality
\begin{equation}\label{eq:monday}
|S(\alpha)|^{2^{d-1}}\leq |P|^{(2^{d-1}-d+1)n}N(\alpha),
\end{equation}
for any $\alpha\in \TT$.

The next stage in the analysis of $S(\alpha)$ is a multiple application of the function field analogue of Davenport's ``shrinking lemma'',  as proved in \cite[Lemma~4.3.3]{lee}
(cf.~\cite[Lemma 3.4]{lee'}), 
ultimately leading to \cite[Lemma 4.3.4]{lee}
(cf.~\cite[Lemma 3.5]{lee'}). 
Unfortunately the implied constant in these estimates is allowed to depend on $q$ and so we must work   harder to control it.
Let
$$
N_\eta(\alpha)=
\#\left\{ \underline{\u}\in \cO^{(d-1)n}: 
\begin{array}{l}
|\u_1|,\dots,|\u_{d-1}|<|P|^\eta\\
\|\alpha \Psi_i(\underline{\u}) \|<|P|^{-d+(d-1)\eta} ~(\forall   i\leq n)
\end{array}{}
\right\},
$$
for any parameter $\eta\in [0,1].$  Recalling that $P=t^{e+1}$,  we shall prove the following uniform version  of \cite[Lemma 4.3.4]{lee}
(cf.~\cite[Lemma 3.5]{lee'}).

\begin{lemma}\label{lem:3.5}
Let  $\alpha\in \TT$ and suppose that $\eta\in [0,1)$ is chosen so that 
\begin{equation}\label{eq:hypothesis}
\frac{(e+1)(\eta+1)}{2}\in \ZZ.
\end{equation}
Then we have 
$
N(\alpha)\leq 
|P|^{(n-\eta n)(d-1)}N_{\eta}(\alpha).
$
In particular, we have 
$$
|S(\alpha)|^{2^{d-1}}\leq \frac{|P|^{2^{d-1}n}}{|P|^{\eta (d-1)n}}N_\eta(\alpha).
$$
\end{lemma}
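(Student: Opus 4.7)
The plan is to interpolate between $N(\alpha)$ and $N_\eta(\alpha)$ by shrinking one variable $\u_j$ at a time, applying Lemma~\ref{lem:cape} at each step. Introduce for $0\leq j\leq d-1$ the intermediate count
\begin{align*}
N_j(\alpha)=\#\left\{\underline{\u}\in\cO^{(d-1)n}:
\begin{array}{l}
|\u_i|<|P|^\eta\ (i\leq j),\ |\u_i|<|P|\ (i>j)\\
\|\alpha\Psi_i(\underline{\u})\|<|P|^{-1-j(1-\eta)}\ (\forall i\leq n)
\end{array}\right\},
\end{align*}
so that $N_0(\alpha)=N(\alpha)$ and $N_{d-1}(\alpha)=N_\eta(\alpha)$. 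Then the lemma will follow once one shows the per-step estimate
\begin{align*}
N_j(\alpha)\leq |P|^{(1-\eta)n}\, N_{j+1}(\alpha)\qquad (0\leq j\leq d-2),
\end{align*}
since iterating this $d-1$ times produces $N(\alpha)\leq|P|^{(d-1)(1-\eta)n}N_\eta(\alpha)$, and combining with \eqref{eq:monday} yields the displayed bound on $|S(\alpha)|^{2^{d-1}}$.

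To prove the per-step estimate, fix $\u_1,\dots,\u_j,\u_{j+2},\dots,\u_{d-1}$ and count over $\u_{j+1}$. By multilinearity $\Psi_i(\underline{\u})=\sum_k\gamma_{ik}u_{j+1,k}$, where $\gamma_{ik}$ depends only on $F$ and the fixed variables; the total symmetry of the coefficients $c_{i_1,\dots,i_d}$ of $F$ forces $\gamma_{ik}=\gamma_{ki}$, so $(\alpha\gamma_{ik})$ is a symmetric $n\times n$ matrix over $K_\infty$. This is exactly the input required for Lemma~\ref{lem:cape} (with $\alpha\gamma$ in place of $\gamma$). Matching the constraints in $N_j$ to $\hat a\hat Z_2=|P|$, $\hat Z_2/\hat a=|P|^{-1-j(1-\eta)}$, and those in $N_{j+1}$ to $\hat a\hat Z_1=|P|^\eta$, $\hat Z_1/\hat a=|P|^{-1-(j+1)(1-\eta)}$, a short computation gives
\begin{align*}
a=(e+1)\bigl(1+\tfrac{j(1-\eta)}{2}\bigr),\quad Z_2=-\tfrac{(e+1)j(1-\eta)}{2},\quad Z_1=Z_2-(e+1)(1-\eta),
\end{align*}
with the crucial property that the \emph{same} $a$ governs both slices (so Lemma~\ref{lem:cape} applies uniformly across the shrinking step) and $Z_1\leq Z_2\leq 0$.

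Hypothesis \eqref{eq:hypothesis} is exactly what forces $\{a\}=0$: setting $A=(e+1)(\eta+1)/2\in\ZZ$, the identity $(e+1)(1-\eta)/2=(e+1)-A$ shows that $(e+1)(1-\eta)/2\in\ZZ$, whence $a,Z_1,Z_2\in\ZZ$. Therefore $K=Z_1-Z_2=-(e+1)(1-\eta)$ and $\hat K^n=|P|^{-(1-\eta)n}$, so Lemma~\ref{lem:cape} produces the per-slice bound; summing over all choices of the fixed $\u_\ell$ gives the per-step estimate, iteration yields $N(\alpha)\leq|P|^{(d-1)(1-\eta)n}N_\eta(\alpha)$, and combining with \eqref{eq:monday} gives
\begin{align*}
|S(\alpha)|^{2^{d-1}}\leq|P|^{(2^{d-1}-d+1)n+(d-1)(1-\eta)n}N_\eta(\alpha)=\frac{|P|^{2^{d-1}n}}{|P|^{\eta(d-1)n}}\,N_\eta(\alpha).
\end{align*}
The main obstacle in executing this plan is the parameter bookkeeping: one must verify the same-$a$ matching between consecutive slices (so that the shrinking lemma applies cleanly) and confirm that \eqref{eq:hypothesis} eliminates the fractional-part losses hidden in the formula for $K$ in Lemma~\ref{lem:cape}. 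Once those two points are in hand, the argument is mechanical.
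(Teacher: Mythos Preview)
Your proof is correct and follows essentially the same approach as the paper's own argument: the paper also interpolates via the intermediate counts $N^{(v)}(\alpha)$ (your $N_j(\alpha)$ with $j=v$), fixes all but one block of variables, and applies Lemma~\ref{lem:cape} with the very same choice $\hat a=|P|^{(v+1-(v-1)\eta)/2}$, $\hat Z_1=|P|^{(v+1)(\eta-1)/2}$, $\hat Z_2=|P|^{(v-1)(\eta-1)/2}$, using~\eqref{eq:hypothesis} to force $a,Z_1,Z_2\in\ZZ$. Your observation that the symmetry of $(\gamma_{ik})$ comes from the symmetry of the coefficients $c_{i_1,\dots,i_d}$ is exactly the point the paper uses implicitly when it declares $\gamma$ symmetric.
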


\begin{proof}
In view of \eqref{eq:defineN} and  \eqref{eq:monday}, 
the final part follows from the first part. 
For each $v\in \{0,\dots,d-1\}$, 
define $N^{(v)}(\alpha)$ to be the number of $\underline{\u}\in \cO^{(d-1)n}$
such that 
\begin{equation}\label{eq:lapel}
|\u_1|,\dots,|\u_{v}|<|P|^\eta, \quad 
|\u_{v+1}|,\dots,|\u_{d-1}|<|P| 
\end{equation}
and 
$
\|\alpha \Psi_i(\underline{\u}) \|<|P|^{-v-1+v\eta},
$
for $1\leq i\leq n$.
Thus we have $N^{(0)}(\alpha)=N(\alpha)$ and $N^{(d-1)}(\alpha)=N_\eta(\alpha)$. It will suffice to show that 
\begin{equation}\label{eq:**}
N^{(v)}(\alpha)\geq 
|P|^{-n+\eta n}
N^{(v-1)}(\alpha),
\end{equation}
for each $v\in \{1,\dots,d-1\}$.

Fix a choice of $v$, together with $\u_1,\dots,\u_{v-1},\u_{v+1},\dots,\u_{d-1}\in \cO^n$ such that \eqref{eq:lapel} holds.
For each $1\leq i\leq n$ we consider the linear form 
\begin{align*}
L_i(\u)
&=\alpha \Psi_i(\u_1,\dots,\u_{v-1},\u,\u_{v+1},\dots,\u_{d-1})
=\sum_{j=1}^n \gamma_{ij}u_j,
\end{align*}
say, for a suitable 
symmetric $n\times n$ matrix $\gamma=(\gamma_{ij})$,  with entries in $K_\infty$. 
Given real numbers $a$ and $Z$, define  $N(a,Z)$ to be the number of  vectors $(u_1,\dots,u_{2n})$ in $\cO^{2n}$ satisfying 
\begin{align*}
  |u_j|<\hat{Z+a} \quad \text{ and }\quad |L_j(u_1,\dots,u_n)-u_{j+n}|<\hat{Z-a},  \quad \text{ for $1\leq j\leq n$}.
\end{align*}

We are interested in estimating   the number of $\u\in \cO^n$ such that $|\u|<|P|^\eta$ and 
$\|L_i(\u)\|<|P|^{-v-1+v\eta}$, for $1\leq i\leq n$, in terms of the  number of 
 $\u\in \cO^n$ such that $|\u|<|P|$ and 
$\|L_i(\u)\|<|P|^{-v+(v-1)\eta}$, for $1\leq i\leq n$. 
That is, we wish to compare 
$N(a,Z_1)$ with $N(a,Z_2)$, where 
$$\hat a=|P|^{(v+1-(v-1)\eta)/2}, \quad \hat{Z_1}=|P|^{(v+1)(\eta-1)/2},
\quad\hat{Z_2}=|P|^{(v-1)(\eta-1)/2}. 
$$
Note that $\hat a \hat Z_1=|P|^\eta$ and $\hat a \hat Z_2=|P|$.
Moreover, our hypothesis \eqref{eq:hypothesis} implies that 
$$
a=\frac{(e+1)(v+1)}{2}-\frac{(v-1)(e+1)\eta}{2}
=v(e+1)-
\frac{(v-1)(e+1)(\eta+1)}{2}
\in \ZZ.
$$
Similarly, \eqref{eq:hypothesis} implies that $Z_1,Z_2\in \ZZ$.
It now follows from Lemma \ref{lem:cape} that 
$$
\frac{N(a,Z_1)}{N(a,Z_2)}\geq 
\left(\widehat{ Z_1- Z_2 }\right)^n=|P|^{-n+\eta n},
$$
which thereby completes the proof of \eqref{eq:**}.
\end{proof}

Lemma \ref{lem:3.5} doesn't allow us  to handle the case $e=1$ of lines.  To circumvent this difficulty we shall invoke 
a simpler version of the shrinking lemma, as follows.

\begin{lemma}\label{lem:3.5 e=1}
Let  $\alpha\in \TT$ and let   $ v\in \{1,\dots,d\}$. Then we have 
$$
|S(\alpha)|^{2^{d-1}}\leq |P|^{(2^{d-1}-d+1)n}
q^{e(v-1)n}M^{(v)}(\alpha),
$$
where
$M^{(v)}(\alpha)$ is the number of 
$\underline{\u}\in \cO^{(d-1)n}$ such that 
$$
|\u_1|,\dots,|\u_{v-1}|<q, \quad 
|\u_{v}|,\dots,|\u_{d-1}|<|P|
$$
and 
$\|\alpha \Psi_i(\underline{\u}) \|<|P|^{-1}$ for $1\leq i\leq n$.
\end{lemma}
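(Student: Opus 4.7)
The plan is to mimic the structure of the proof of Lemma \ref{lem:3.5}, but to replace its appeal to Lemma \ref{lem:cape} by a cruder group-theoretic comparison. The key point is that this simpler inequality does not require any parameter to be an integer, so it applies uniformly in $e$, and in particular in the case $e=1$ for which Lemma \ref{lem:3.5} fails (since $(e+1)(\eta+1)/2\in\ZZ$ with $|P|^\eta=q$ would force $\eta=1/2$).

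The starting point will again be the Weyl differencing bound \eqref{eq:monday}, so it suffices to show that
$N(\alpha)\leq q^{e(v-1)n}M^{(v)}(\alpha).$
To interpolate between the two quantities, I would introduce, for $0\leq j\leq v-1$, the count $N^{(j)}(\alpha)$ of tuples $\underline{\u}=(\u_1,\dots,\u_{d-1})\in\cO^{(d-1)n}$ satisfying $|\u_i|<q$ for $1\leq i\leq j$, $|\u_i|<|P|$ for $j+1\leq i\leq d-1$, and $\|\alpha\Psi_i(\underline{\u})\|<|P|^{-1}$ for $1\leq i\leq n$. By construction $N^{(0)}(\alpha)=N(\alpha)$ and $N^{(v-1)}(\alpha)=M^{(v)}(\alpha)$, so the bound will follow from
$$N^{(j-1)}(\alpha)\leq q^{en}N^{(j)}(\alpha)\qquad (1\leq j\leq v-1),$$
iterated $v-1$ times.

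To establish this one-step inequality, I would fix all variables other than $\u_j$. Because the multilinear form $\Psi_i$ in \eqref{eq:multi} is linear in its $j$-th argument, and because $\|\cdot\|$ is ultrametric on $K_\infty$, the set $A$ of admissible $\u_j\in\cO^n$ with $|\u_j|<|P|$ is an additive subgroup of $\cO^n$. Similarly, the set $A'=A\cap\{\u\in\cO^n:|\u|<q\}$ of admissible $\u_j$ in the smaller box is a subgroup. Since the two box conditions themselves define additive subgroups of $\cO^n$ and the intersection is exactly $A\cap\{|\u|<q\}$, the natural map $A/A'\hookrightarrow\{|\u|<|P|\}/\{|\u|<q\}$ is injective. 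The target has cardinality $(|P|/q)^n=q^{en}$, giving $|A|\leq q^{en}|A'|$. Summing over the frozen variables yields the claimed bound.

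There is essentially no difficulty to overcome: the whole argument rests on the observation that shrinking only the box for $\u_j$ (from $|P|$ down to $q$) without altering the constraints $\|\alpha\Psi_i(\underline{\u})\|<|P|^{-1}$ reduces to a plain quotient-of-subgroups computation, bypassing the successive-minima machinery altogether. This is the feature that makes the lemma work uniformly in $e$, and in particular handles the case $e=1$ that Lemma \ref{lem:3.5} cannot reach.
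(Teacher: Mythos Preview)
Your argument is correct, and it is in fact more elementary than the paper's. Both proofs share the same outer skeleton: start from \eqref{eq:monday}, recognise $N(\alpha)=M^{(1)}(\alpha)$, and then show $M^{(v-1)}(\alpha)\leq q^{en}M^{(v)}(\alpha)$ for each $v$. The difference lies in how the one-step inequality is obtained. The paper freezes the other variables and invokes Lemma~\ref{lem:cape} with $a=e+1$, $Z_1=-e$, $Z_2=0$, so that the geometry-of-numbers estimate gives $N(e+1,0)\leq q^{en}N(e+1,-e)$; it then observes that $N(e+1,-e)$ imposes $\|L_i(\u)\|<q^{-2e-1}<|P|^{-1}$, hence is bounded by the contribution to $M^{(v)}(\alpha)$. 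Your route avoids the successive-minima machinery altogether: since only the box for the single variable $\u_j$ shrinks while the constraint $\|\alpha\Psi_i(\underline\u)\|<|P|^{-1}$ is unchanged, the admissible set $A$ is a subgroup of the large box and $A'=A\cap\{|\u|<q\}$, so the second isomorphism theorem gives $|A/A'|\leq |\{|\u|<|P|\}/\{|\u|<q\}|=q^{en}$ directly. The paper's detour through Lemma~\ref{lem:cape} buys uniformity with the framework of Lemma~\ref{lem:3.5}, but for this particular lemma your subgroup argument is shorter and self-contained.
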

\begin{proof}
Noting that $N(\alpha)=M^{(1)}(\alpha)$, it follows from \eqref{eq:monday} that it will be enough to prove that $M^{(v-1)}(\alpha)\leq q^{en} M^{(v)}(\alpha)$ for  $2\leq v\leq d$. The proof follows that of Lemma \ref{lem:3.5} and so 
we shall be brief. Let $\u_1,\dots,\u_{v-1},\u_{v+1},\dots,\u_{d-1}\in \cO^n$ be vectors satisfying 
\begin{equation*}
|\u_1|,\dots,|\u_{v-1}|<q, \quad 
|\u_{v+1}|,\dots,|\u_{d-1}|<|P|. 
\end{equation*}
Let $\gamma$ and $N(a,Z)$ be as in the proof of Lemma \ref{lem:3.5}, corresponding to this choice. 
Lemma \ref{lem:cape} clearly implies that 
$$\frac{N(e+1,-e)}{N(e+1,0)}\geq q^{-en}. 
$$
However, $N(e+1,-e)$ denotes the number of 
$\u\in \mathcal{O}^n$ such that 
$|\u|<q$ and  $\|L_i(\u)\|<q^{-2e-1}$, for $1\leq i\leq n$. The lemma follows on
noting that $q^{-2e-1}<q^{-e-1}=|P|^{-1}$.
\end{proof}

The next step is an application of the function field analogue of Heath-Brown's Diophantine approximation lemma, as worked out in \cite[Lemma 4.3.5]{lee}
(cf.~\cite[Lemma 3.6]{lee'}).  
Let $\alpha=a/r+\theta$, where $a/r\in K$ and $\theta\in \TT.$ Note that the maximum absolute value of the coefficients of each multilinear form $\Psi_j$ is $1$. 
We shall apply 
\cite[Lemma 4.3.5]{lee} 
(cf.~\cite[Lemma 3.6]{lee'}) 
with $\hat M=|P|^{(d-1)\eta}$ and $\hat Y=|P|^{d-(d-1)\eta}$.
We want a maximal choice of $\eta\geq 0$ such that 
$$
|P|^{(d-1)\eta}<\min\left\{ |P|^{d-1}, \frac{1}{|r\theta|}, \frac{|P|^d}{|r|}\right\}
$$
and
$$
|P|^{(d-1)\eta}\leq |r|\max\left\{ 1, |P^d\theta|\right\}.
$$
This leads to the constraint $(e+1)\eta\leq \Gamma$, where
\begin{equation}\label{eq:Gamma}
\Gamma=\frac{1}{d-1}\ord\left(\min\left\{
\frac{|P|^{d-1}}{q},~ \frac{1}{q |r\theta|}, ~
\frac{|P|^d}{q|r|}, ~|r|\max\left\{ 1, |P^d \theta|\right\} 
\right\}\right),
\end{equation}
in which we abuse notation and denote by $\ord$ the integer exponent of $q$ that appears.
For $i\in \{0,1\}$, let $[\Gamma]_i$ denote the largest non-negative integer not exceeding $\Gamma$, which is congruent to $i$ modulo $2$.
We then choose  $\eta$ via
\begin{equation}\label{eq:eta}
(e+1)\eta= \begin{cases}
[\Gamma]_0 & \text{ if $2\nmid e$,}\\
[\Gamma]_1 & \text{ if $2\mid e$.}
\end{cases}
\end{equation}
One notes that $(e+1)\eta\leq \Gamma$ and \eqref{eq:hypothesis} is satisfied.

It now
follows from 
\cite[Lemma 4.3.5]{lee} 
(cf.~\cite[Lemma 3.6]{lee'}) 
that 
$
N_\eta(\alpha)\leq U_\eta,
$
where $U_\eta$ denotes the number of 
$\underline{\u}\in \cO^{(d-1)n}$ such that 
$|\u_1|,\dots,|\u_{d-1}|<|P|^\eta$ and 
$$
\Psi_i(\underline{\u})=0, \quad \text{ for $1\leq i\leq n$}.
$$
A standard calculation, which we recall here for completeness, now 
shows that the latter system of equations defines an affine variety 
$V\subset \AA^{(d-1)n}$ of dimension 
at most $(d-2)n$.
To see this, we note that the intersection of $V$ with the diagonal $\Delta=\{\underline\u\in \AA^{(d-1)n}: 
\u_1=\dots=\u_{d-1}\}$ is contained in the singular locus of $F$ and so has affine dimension $0$. The claim follows on noting that 
$0=\dim(V\cap \Delta)\geq \dim V+\dim \Delta -(d-1)n=\dim V-(d-2)n$ .

We now apply \cite[Lemma~2.8]{BV}. Since $|P|^\eta=q^{(e+1)\eta}$, with $(e+1)\eta\in \ZZ$, this  directly yields the existence of a positive constant $c_{d,n}$, independent of $q$,  such that  
$
U_\eta\leq c_{d,n} 
|P|^{\eta (d-2)n}$.
Inserting this into Lemma~\ref{lem:3.5}, we therefore arrive at the following conclusion.

\begin{lemma}
\label{lem:pointwise}
Let $L=2^{-d+1}n$, let  $a/r\in K$  and let $\theta\in \TT$. 
Let  $\eta$ be given by \eqref{eq:eta}.
Then  there exists a constant $c_{d,n}>0$, independent of $q$, such that 
$$
|S(a/r+\theta)|\leq c_{d,n} |P|^{n-L\eta}.
$$
\end{lemma}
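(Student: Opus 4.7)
The plan is to chain three ingredients already in place: the uniform Weyl differencing estimate Lemma \ref{lem:3.5}, Lee's function field Diophantine approximation lemma \cite[Lemma 4.3.5]{lee}, and the affine variety counting bound \cite[Lemma 2.8]{BV}.

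First I would apply Lemma \ref{lem:3.5} with $\eta$ chosen as in \eqref{eq:eta}, having already noted that this satisfies \eqref{eq:hypothesis}. This yields
$$|S(a/r+\theta)|^{2^{d-1}} \leq |P|^{2^{d-1}n - \eta(d-1)n}\, N_\eta(a/r+\theta).$$
Next, I would invoke Lee's Diophantine approximation lemma with $\hat M=|P|^{(d-1)\eta}$ and $\hat Y=|P|^{d-(d-1)\eta}$; the discussion leading to \eqref{eq:Gamma} and the bound $(e+1)\eta\leq \Gamma$ guarantees that its hypotheses are met. This reduces $N_\eta(a/r+\theta)$ to the count $U_\eta$ of integer solutions to the simultaneous multilinear equations $\Psi_i(\underline{\u})=0$ in the box $|\u_j|<|P|^\eta$.

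Then I would bound $U_\eta$ by first establishing $\dim V\leq (d-2)n$ for the affine variety $V\subset\AA^{(d-1)n}$ cut out by $\Psi_1=\dots=\Psi_n=0$. This comes from the diagonal argument: the intersection $V\cap\Delta$ with $\Delta=\{\underline{\u}\in\AA^{(d-1)n}: \u_1=\dots=\u_{d-1}\}$ lies inside the affine singular locus of $F$, which is zero dimensional by smoothness of the hypersurface, and the elementary dimension inequality $\dim V+\dim\Delta-(d-1)n\leq\dim(V\cap\Delta)$ delivers the claim. An application of \cite[Lemma~2.8]{BV}, legitimate because $(e+1)\eta\in\ZZ$ makes $|P|^\eta$ an integer power of $q$, then yields $U_\eta\leq c_{d,n}|P|^{\eta(d-2)n}$ with a constant $c_{d,n}$ independent of $q$.

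Finally, combining the estimates gives $|S(a/r+\theta)|^{2^{d-1}}\leq c_{d,n}|P|^{2^{d-1}n-\eta n}$, and extracting the $2^{d-1}$-th root delivers the bound with $L=n/2^{d-1}=2^{-d+1}n$. The only real subtlety, already addressed upstream, is keeping everything uniform in $q$: the choice of $\eta$ in \eqref{eq:eta} is engineered precisely so that $(e+1)\eta\in\ZZ$, which is what makes both Lemma \ref{lem:3.5} and \cite[Lemma 2.8]{BV} applicable with $q$-independent constants. Once that bookkeeping is in place, the proof amounts to a single multiplication of inputs.
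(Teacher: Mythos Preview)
Your proposal is correct and mirrors the paper's argument essentially step for step: Lemma~\ref{lem:3.5} followed by Lee's Diophantine approximation lemma to pass from $N_\eta$ to $U_\eta$, then the diagonal intersection argument to bound $\dim V\leq (d-2)n$, and finally \cite[Lemma~2.8]{BV} exploiting $(e+1)\eta\in\ZZ$. There is no substantive difference in approach or execution.
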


It turns out that this  estimate is  inefficient when $|r|$ is small. 
Let
\begin{equation}\label{eq:kappa}\kappa=\begin{cases}
1 &\text{ if $2\nmid e$,}\\
0 &\text{ if $2\mid e$.}
\end{cases}
\end{equation}
It will also be advantageous to  consider the effect of taking $(e+1)\eta=1+\kappa$, instead of \eqref{eq:eta}.
Since
$$
\frac{(e+1)(\eta+1)}{2}=1+\frac{e+\kappa}{2}\in \ZZ,
$$
it follows from Lemma \ref{lem:3.5} that 
\begin{equation}\label{eq:curly-N1}
|S(\alpha)|\leq \frac{|P|^n\mathcal{N}^{2^{-d+1}}}{q^{(1+\kappa)(d-1)L}} ,
\end{equation}
where 
\begin{equation}\label{eq:curly-N2}
\mathcal{N}=
\#\left\{ \underline{\u}\in \cO^{(d-1)n}: 
\begin{array}{l}
|\u_1|,\dots,|\u_{d-1}|\leq q^\kappa\\
\|\alpha \Psi_i(\underline{\u}) \|< 
q^{\kappa(d-1)-de-1}
 ~(\forall   i\leq n)
\end{array}{}
\right\},
\end{equation}
Supposing that $\alpha=a/r+\theta$ for $a/r\in K$ and $\theta\in \TT$, our argument now bifurcates according to the degree of $r$.

\begin{lemma}[$\deg(r)\geq 1$]
\label{lem:pointwise'}
Let $L=2^{-d+1}n$, let  $a/r\in K$  and let $\theta\in \TT$. Assume that 
\begin{itemize}
\item[(i)]
$e\geq 1$, 
$q\leq |r|< q^{de+1-\kappa(d-1)}$ and 
$|r\theta|<q^{-\kappa (d-1)}$; or
\item[(ii)]
$e=1$, 
$q^2\leq |r|\leq q^{d}$ and
$|r\theta|\leq q^{-d}$.
\end{itemize}
Then  there exists a constant $c_{d,n}'>0$, independent of $q$, such that 
$$
|S(a/r+\theta)|\leq c_{d,n}' |P|^{n}q^{-L}.
$$
\end{lemma}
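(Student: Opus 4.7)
The plan is to apply the function field Diophantine approximation lemma \cite[Lemma~4.3.5]{lee} to replace the condition $\|\alpha \Psi_i(\underline{\u})\|$ being small in \eqref{eq:curly-N2} (or its analogue in case (ii)) by the exact equations $\Psi_i(\underline{\u}) = 0$, and then to bound the resulting count using that the system $\{\Psi_i = 0\}_{i=1}^n$ cuts out an affine variety $V \subset \AA^{(d-1)n}$ of dimension at most $(d-2)n$, exactly as in the discussion preceding Lemma~\ref{lem:pointwise}. Combining with \eqref{eq:curly-N1} and using $L = 2^{-d+1}n$, a bound of the shape $\mathcal{N} \le c_{d,n} q^{(1+\kappa)(d-2)n}$ would give $\mathcal{N}^{2^{-d+1}} \le c \cdot q^{(1+\kappa)(d-2)L}$, so the exponents in \eqref{eq:curly-N1} combine to $-(1+\kappa)L \le -L$, yielding $|S(\alpha)| \le c|P|^n q^{-L}$.

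In case (i), I would first observe that $\mathcal{N}$ from \eqref{eq:curly-N2} coincides with $N_\eta(\alpha)$ of Lemma~\ref{lem:3.5} for the choice $(e+1)\eta = 1+\kappa$; the congruence \eqref{eq:hypothesis} holds because $e$ and $\kappa$ share parity by \eqref{eq:kappa}. One then verifies the four inequalities underpinning \cite[Lemma~4.3.5]{lee} at this value, where now $|P|^{(d-1)\eta} = q^{(1+\kappa)(d-1)}$: the constraints $q \le |r| < q^{de+1-\kappa(d-1)}$ and $|r\theta| < q^{-\kappa(d-1)}$ in (i) are calibrated precisely so that $|P|^{(d-1)\eta}$ is dominated (in the appropriate sense, accounting for the discreteness of $|\cdot|$) by each of $|P|^{d-1}$, $1/|r\theta|$, $|P|^d/|r|$ and $|r|\max\{1, |P^d\theta|\}$. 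Once the Diophantine lemma applies, $\mathcal{N}$ is bounded by $U_\eta$ counting $\underline{\u} \in \cO^{(d-1)n}$ with $|\u_j| < q^{1+\kappa}$ and $\Psi_i(\underline{\u}) = 0$; then \cite[Lemma~2.8]{BV} gives $U_\eta \le c_{d,n} q^{(1+\kappa)(d-2)n}$, as needed.

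Case (ii) is required because when $e = 1$ and $\kappa = 1$ we would have $\eta = 1$, making the inequality $|P|^{(d-1)\eta} < |P|^{d-1}$ an equality and preventing a direct application of Lemma~\ref{lem:3.5}. I would route around this by invoking Lemma~\ref{lem:3.5 e=1} with $v = d$, which gives $|S(\alpha)|^{2^{d-1}} \le |P|^{(2^{d-1}-d+1)n} q^{(d-1)n} M^{(d)}(\alpha)$, where $M^{(d)}(\alpha)$ counts $\underline{\u} \in \cO^{(d-1)n}$ with $|\u_j| < q$ (so $\u_j \in \FF_q^n$) and $\|\alpha \Psi_i(\underline{\u})\| < |P|^{-1}$. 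The stronger hypotheses $q^2 \le |r| \le q^d$ and $|r\theta| \le q^{-d}$ are designed precisely so that \cite[Lemma~4.3.5]{lee} still applies, reducing $M^{(d)}(\alpha)$ to the count of $\underline{\u} \in \FF_q^{(d-1)n}$ with $\Psi_i(\underline{\u}) = 0$, which is $\le c_{d,n} q^{(d-2)n}$ by \cite[Lemma~2.8]{BV}. Substituting, the powers of $q$ and $|P|$ again combine to deliver $|S(\alpha)| \le c|P|^n q^{-L}$. The main technical obstacle throughout is the careful bookkeeping of the four inequalities controlling the Diophantine approximation lemma at our chosen $\eta$; the split into cases (i) and (ii) is forced precisely by the edge behaviour at $e = 1$, $\kappa = 1$, which is what necessitates the stricter numerical constraints on $|r|$ and $|r\theta|$ in case (ii).
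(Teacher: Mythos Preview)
Your plan has genuine gaps in both cases.

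\textbf{Case (i).} The four inequalities you claim are ``calibrated precisely'' by the hypotheses are not. With $(e+1)\eta=1+\kappa$ one has $|P|^{(d-1)\eta}=q^{(1+\kappa)(d-1)}$. The hypothesis $|r\theta|<q^{-\kappa(d-1)}$ only yields $1/|r\theta|>q^{\kappa(d-1)}$, which falls short of $q^{(1+\kappa)(d-1)}$ by the factor $q^{d-1}$ (and $d\geq 3$), so the inequality $|P|^{(d-1)\eta}<1/|r\theta|$ can fail. Likewise, $|P|^{(d-1)\eta}\leq |r|\max\{1,|P^d\theta|\}$ fails whenever $|r|$ is near $q$ and $|P^d\theta|\leq 1$, since then the right side is just $|r|\leq q^{d-2}<q^{(1+\kappa)(d-1)}$. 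Consequently \cite[Lemma~4.3.5]{lee} does \emph{not} give $\Psi_i(\underline{\u})=0$ here. The paper's argument is genuinely different: it applies the Diophantine lemma with $\hat Y=q^{de+1-\kappa(d-1)}$ and $\hat M=q^{\kappa(d-1)+1/2}$ to conclude only the congruence $\Psi_i(\underline{\u})\equiv 0\bmod r$, and then (for $\kappa=1$) bounds $\mathcal{N}$ by choosing a prime $\varpi\mid r$ and splitting according to whether $|\varpi|\leq q^2$ (residue classes plus Lang--Weil over $\FF_\varpi$) or $|\varpi|>q^2$ (an induction-on-dimension argument \`a la \cite[Lemma~4]{41}).

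\textbf{Case (ii).} Fixing $v=d$ does not work uniformly. With $e=1$ the count $M^{(d)}(\alpha)$ runs over $\underline{\u}\in\FF_q^{(d-1)n}$, so $\Psi_i(\underline{\u})\in\FF_q$ and for $c\in\FF_q^*$ one has $\|\alpha c\|=|a/r|$ (since $|\theta|\leq q^{-d-2}<|a/r|$). Hence the constraint $\|\alpha\Psi_i\|<|P|^{-1}=q^{-2}$ forces $\Psi_i=0$ only when $|a/r|\geq q^{-2}$; if $|a/r|\leq q^{-3}$ the constraint is vacuous, $M^{(d)}(\alpha)=q^{(d-1)n}$, and the bound from Lemma~\ref{lem:3.5 e=1} collapses to the trivial $|S(\alpha)|\leq |P|^n$. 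The paper avoids this by choosing $v$ to depend on $|a/r|=q^{-\alpha}$ (specifically $v=d+1-\alpha$) and then extracting, by hand, the coefficient of $t^{-1}$ in $(a/r)\Psi_i(\underline{\u})$ after writing $\u_j=\u_j'+t\u_j''$ for $j\geq v$; this produces an honest equation $\Psi_i(\u_1,\dots,\u_{v-1},\u_v'',\dots,\u_{d-1}'')=0$ over $\FF_q$ to which Lang--Weil applies.
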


\begin{proof}
To deal with case (i) we apply 
 \cite[Lemma 4.3.5]{lee}
 (cf.~\cite[Lemma 3.6]{lee'}) 
with $Y=de+1-\kappa(d-1)$
and 
$M=\kappa(d-1)+\frac{1}{2}$. Our hypotheses ensure that $|r|<\hat Y$ and $|r\theta|<\hat M^{-1}$. Thus it follows that 
$\Psi_i(\underline{\u})\equiv 0\bmod{r}$ in \eqref{eq:curly-N2}, for all $i\leq n$. 
In particular we have $\mathcal{N}=0$ unless $\kappa=1$, which we now assume.

Pick a prime $\varpi\mid r$ with $|\varpi|\geq q$. If $|\varpi|\leq q^2$ we may break into residue classes modulo $\varpi$, finding that 
$$
\mathcal{N}\leq \sum_{\v_1,\dots,\v_{d-1}}
\#\left\{|\u_1|,\dots,|\u_{d-1}|\leq q: \u_i\equiv {\v_i} \bmod \varpi, \text{ for $1\leq i\leq d-1$}
\right\},
$$ 
where the sum is over all
$\underline{\v}=(\v_1,\dots,\v_{d-1})\in \FF_\varpi^{(d-1)n}$ such that 
$\Psi_i(\underline{\v})=0$, for all $i\leq n$,  over $\FF_\varpi.$ The inner cardinality is  $O((q^2/|\varpi|)^{(d-1)n})$, with an implied constant that is independent of $q$. We may use the Lang--Weil estimate to deduce that the outer sum is $O(|\varpi|^{(d-2)n})$, again with an implied constant that depends at most on $d$ and $n$. Hence we get the overall contribution
$$
\mathcal{N}\ll \frac{q^{2(d-1)n}}{|\varpi|^n}\leq  q^{2(d-1)n-n}.
$$
Alternatively, if $|\varpi|>q^2$, we may assume that the system of equations 
$\Psi_i=0$, for $i\leq n$, has dimension $(d-2)n$ over $\FF_\varpi.$
We now appeal to an argument of Browning and Heath-Brown \cite[Lemma 4]{41}.
Using induction on the dimension, as in the proof of \cite[Eq.~(3.7)]{41}, we easily conclude that 
$$\mathcal{N}\ll (q^{2})^{(d-2)n}\leq q^{2(d-1)n-2n},
$$ 
for an implied constant that only depends on $d$ and $n$.
Recalling that $\kappa=1$, 
 the first part of the lemma now follows on substituting these bounds into \eqref{eq:curly-N1}.

We now consider  case (ii), in which $e=1$, 
$q^2\leq |r|\leq q^{d}$ and $
|r\theta|\leq q^{-d}$.   Let $|a/r|=q^{-\alpha}$ for  $1\leq \alpha\leq d$. Let 
$v\in \{1,\dots, d\}$ be such that $d-v-\alpha=-1$. 
Then 
an application of Lemma \ref{lem:3.5 e=1} yields
\begin{align*}
|S(\alpha)|^{2^{d-1}}
&\leq |P|^{(2^{d-1}-d+1)n}q^{(v-1)n}M^{(v)}(\alpha)\\ 
&=
|P|^{2^{d-1}n}\cdot 
q^{(-2d+1+v)n}M^{(v)}(\alpha).
\end{align*}
Let $\underline{\u}\in\cO^{n(d-1)}$ be counted by $M^{(v)}(\alpha)$.
Since $|\theta|\leq q^{-d-2}$, it follows that
$|\theta\Psi_i(\underline{\u})|\leq  q^{-d-2}\cdot q^{d-v}=q^{-2-v}\leq q^{-3}$, for $1\leq i\leq n$. Similarly, 
for 
$1\leq i\leq n$, 
we have 
$|\frac{a}{r}\Psi_i(\underline{\u})|\leq q^{-\alpha}\cdot q^{d-v}=q^{-1}$.
If we write $\u_{j}=\u_j'+t\u_j''$, for $v\leq j\leq d$, where $\u_j', \u_j''\in \FF_q^n$, then the coefficient of $t^{-1}$ in the $t$-expansion of $\frac{a}{r}\Psi_i(\underline{\u})$ is equal to 
$\Psi_i(\u_1,\dots,\u_{v-1},\u''_v,\dots,\u_{d-1}'').
$ 
The condition 
$\|\alpha \Psi_i(\underline{\u}) \|<|P|^{-1}$ 
in $M^{(v)}(\alpha)$  implies that this coefficient must necessarily vanish, whence
$M^{(v)}(\alpha)$ is at most the number of 
$\u_1,\dots\u_{v-1},\u_{v}',\dots,\u_{d-1}', \u_{v}'',\dots,\u_{d-1}''\in\FF_q^n$ for which 
 $\Psi_i(\u_1,\dots,\u_{v-1},\u''_v,\dots,\u_{d-1}'')=0$, for $1\leq i\leq n$.
Thus $$M^{(v)}(\alpha)\ll q^{(d-v)n}\cdot q^{(d-2)n}=q^{(2d-v-2)n},$$
by the Lang--Weil estimate,
which implies the statement of  the lemma.
\end{proof}

\begin{lemma}[$\deg(r)=0$]
\label{lem:pointwise''}
Let $L=2^{-d+1}n$ and  let   $\theta\in \TT$. Assume that
$$
q^{-de-1}\leq |\theta|\leq q^{-1-\kappa (d-1)}.
$$
Then  there exists a constant $c_{d,n}''>0$, independent of $q$, such that 
$$
|S(\theta)|\leq c_{d,n}'' |P|^{n}q^{-L}.
$$
\end{lemma}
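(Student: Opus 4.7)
Since $r=1$, we set $\alpha=\theta$ and use the bound \eqref{eq:curly-N1}, which yields
\begin{equation*}
|S(\theta)|\leq \frac{|P|^n \mathcal{N}^{2^{-d+1}}}{q^{(1+\kappa)(d-1)L}}.
\end{equation*}
The plan is to establish $\mathcal{N}\leq C_{d,n}\, q^{(\kappa(d-1)+d-2)n}$ for some constant $C_{d,n}$ independent of $q$: a direct computation then shows that the resulting exponent of $q$ is $L\bigl[\kappa(d-1)+(d-2)-(1+\kappa)(d-1)\bigr]=-L$, which delivers the lemma with $c''_{d,n}=C_{d,n}^{2^{-d+1}}$.

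To control $\mathcal{N}$ as defined in \eqref{eq:curly-N2}, I would expand each $\u_i$ with $|\u_i|\leq q^\kappa$ as $\u_i=\u_i^{(0)}+t\u_i^{(1)}+\cdots+t^\kappa\u_i^{(\kappa)}$, with $\u_i^{(j)}\in\FF_q^n$. Then $\Psi_i(\underline{\u})\in\cO$ has degree at most $\kappa(d-1)$ in $t$, and its leading coefficient is the pure top-order expression $\Psi_i(\u_1^{(\kappa)},\dots,\u_{d-1}^{(\kappa)})$. The upper bound $|\theta|\leq q^{-1-\kappa(d-1)}$, together with $|\Psi_i(\underline{\u})|\leq q^{\kappa(d-1)}$, gives $|\theta\Psi_i(\underline{\u})|\leq q^{-1}$, so $\|\theta\Psi_i(\underline{\u})\|=|\theta\Psi_i(\underline{\u})|$. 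The lower bound $|\theta|\geq q^{-de-1}$ then converts the defining condition $\|\theta\Psi_i(\underline{\u})\|<q^{\kappa(d-1)-de-1}$ into $|\Psi_i(\underline{\u})|<q^{\kappa(d-1)}$; since the absolute value of any element of $\cO$ is either zero or a power of $q$, this forces the top coefficient $\Psi_i(\u_1^{(\kappa)},\dots,\u_{d-1}^{(\kappa)})$ to vanish for each $1\leq i\leq n$.

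With this reduction in hand, each of the $q^{\kappa(d-1)n}$ choices of the lower-order components $\u_i^{(j)}$ with $j<\kappa$ can be coupled with at most $\#V(\FF_q)$ admissible top-order tuples, where $V\subset\AA^{(d-1)n}$ is the affine variety defined over $\FF_q$ by the system $\Psi_i=0$, $1\leq i\leq n$. The passage preceding Lemma~\ref{lem:pointwise} shows that the smoothness of $F$ forces $\dim V\leq (d-2)n$, and since the degree of $V$ is bounded in terms of $d$ and $n$ alone, the Lang--Weil estimate (or \cite[Lemma~2.8]{BV} already invoked earlier in the section) yields $\#V(\FF_q)\leq C_{d,n}\, q^{(d-2)n}$ with a constant independent of $q$. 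Assembling these pieces gives the required bound on $\mathcal{N}$ and hence the lemma. The most delicate step is the translation in the previous paragraph: both endpoints of the interval $q^{-de-1}\leq|\theta|\leq q^{-1-\kappa(d-1)}$ are used actively---the upper bound to collapse $\|\cdot\|$ onto $|\cdot|$, and the lower bound to compress the threshold $q^{\kappa(d-1)-de-1}/|\theta|$ just enough to kill the leading coefficient of $\Psi_i(\underline{\u})$, while doing no more work than required.
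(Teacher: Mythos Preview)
Your proof is correct and follows essentially the same approach as the paper: reduce $\|\theta\Psi_i(\underline\u)\|$ to $|\Psi_i(\underline\u)|$ via the upper bound on $|\theta|$, use the lower bound on $|\theta|$ to force the top $t$-coefficient $\Psi_i(\u_1^{(\kappa)},\dots,\u_{d-1}^{(\kappa)})$ to vanish, and then bound the resulting $\FF_q$-point count on $V=\{\Psi_1=\cdots=\Psi_n=0\}$ by Lang--Weil. The only cosmetic difference is that the paper treats the cases $\kappa=0$ and $\kappa=1$ separately, whereas you handle them uniformly.
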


\begin{proof}
The upper bound assumed of $|\theta|$ implies that $|\theta\Psi_i(\underline{\u})|\leq q^{-1}$ in \eqref{eq:curly-N2}, for  $1\leq i\leq n$. Hence 
$\|\theta\Psi_i(\underline{\u})\|=|\theta\Psi_i(\underline{\u})|$ for $1\leq i\leq n$. 
Since $\alpha=\theta$ and $|\theta|\geq q^{-de-1}$, 
it follows that the condition 
$\|\alpha \Psi_i(\underline{\u}) \|< 
q^{\kappa(d-1)-de-1}$ is equivalent to 
$| \Psi_i(\underline{\u}) |<q^{\kappa(d-1)}.$
If $\kappa=0$ then it follows from \eqref{eq:curly-N2} that
$$
\mathcal{N}=
\#\left\{ \underline{\u}\in \FF_q^{(d-1)n}: 
 \Psi_i(\underline{\u})=0
 ~(\forall   i\leq n)
\right\}\ll q^{(d-2)n},
$$
by the Lang--Weil estimate. If, on the other hand, $\kappa=1$ then we write 
$\underline{\u}=\underline{\u}'+t \underline{\u}''$ in $\mathcal{N}$, under which transformation 
$| \Psi_i(\underline{\u}) |<q^{d-1}$ is equivalent to 
$\Psi_i(\underline{\u}'')=0$, for $i\leq n$.
Applying the Lang--Weil estimate to this  system of equations, we therefore  deduce that $\mathcal{N}=O(q^{(1+\kappa)(d-1)n-n})$ for $\kappa\in \{0,1\}$. An application of \eqref{eq:curly-N1} now completes the proof of the lemma.
\end{proof}

\section{The contribution from the minor arcs}

We assume that $d\geq 3$ throughout this section. Our goal
is to prove \eqref{eq:cup} for all $e\geq 1$, provided that $n> n_0(d)$,
where $n_0(d)$ is given by \eqref{eq:n0}.
The overall contribution to \eqref{eq:goat} from $|\theta|<q^{-3de}$ is easily seen to be negligible. 
Hence we may redefine the minor arcs to incorporate the condition 
$|\theta|\geq q^{-3de}$. For $\alpha,\beta\in \ZZ_{\geq 0}$, 
let $E(\alpha,\beta)$ denote the overall contribution to
 $N_{\text{minor}}(P)$,
from values of $a,r,\theta$ for which 
$|r|=q^\alpha$ and $|\theta|=q^{-\beta}.$ The contribution is empty unless
\begin{equation}\label{eq:basic}
0\leq \alpha\leq \frac{d(e+1)}{2} \quad \text{ and } \quad  \alpha+\frac{d(e+1)}{2}\leq \beta\leq 3de,
\end{equation}
with $\beta \leq de+1$ if $\alpha=0$. Since there are only finitely many choices of $\alpha,\beta$, in order to prove \eqref{eq:cup}, it will suffice to show that 
$$
\lim_{q\to \infty}q^{-\hat\mu} E(\alpha,\beta)=0,
$$
for each pair  $(\alpha,\beta)$ under consideration, assuming that 
 $n> n_0(d)$.
To begin with, summing trivially over $a$,
we have 
\begin{equation}\label{eq:to-return-to}
E(\alpha,\beta)\leq q^{2\alpha-\beta+1} \max_{\substack{a,r,\theta\\
|a|<|r|=q^\alpha\\ |\theta|=q^{-\beta}}}|S(a/r+\theta)|.
\end{equation}

We start by dealing with generic values of $\alpha$ and $\beta.$
 Lemma \ref{lem:pointwise} implies
that 
$$
E(\alpha,\beta)\leq c_{d,n}q^{2\alpha-\beta+1+(e+1)n-L(e+1)\eta},
$$
where $L=2^{-d+1}n$.
Recalling the definition \eqref{eq:hat-mu} of $\hat\mu$, the exponent of $q$ is $\hat\mu-\hat\nu$, with 
\begin{equation}\label{eq:nu-hat}
\begin{split}
\hat\nu
&=\{(n-d)e+n-1\}-\{
2\alpha-\beta+1+(e+1)n-L(e+1)\eta\}\\
&=L(e+1)\eta+\beta-de-2\alpha-2.
\end{split}
\end{equation}
For the choice of $\eta$ in \eqref{eq:eta}, 
and $n>n_0(d)$, 
we want to determine when  $\hat\nu>0$.
Returning to \eqref{eq:Gamma}, we now see that 
$$
\Gamma=\frac{1}{d-1}\min\left\{ (e+1)(d-1)-1, \beta-\alpha-1, (e+1)d-\alpha-1, \alpha+M\right\},
$$
where
$M=\max\{0,(e+1)d-\beta\}.$ 
The remainder of the  argument is  a case by case analysis. When $[\Gamma]\leq 1$ we shall return to \eqref{eq:to-return-to}, and  argue differently based instead on Lemmas \ref{lem:pointwise'}
and 
\ref{lem:pointwise''}.

\subsection*{Case 1: $\alpha\geq 2(d-1)$ and $\beta\geq (e+1)d+1$}

In this case $M=0$. Using \eqref{eq:basic}, one finds that 
$$
\Gamma=\frac{1}{d-1}\times \begin{cases}
\alpha &\text{ if $\alpha<\frac{d(e+1)}{2}$},\\
\alpha-1 &\text{ if $\alpha=\frac{d(e+1)}{2}$.}
\end{cases}
$$
Let $\iota\in \{0,1\}$. We write
$
\alpha-\iota=k(d-1)+\ell,
$
for  $k\in \ZZ_{\geq 0}$ and $\ell\in \{0,\dots,d-2\}$. 
Then \eqref{eq:eta} implies that 
$(e+1)\eta=k-\delta$, where
\begin{equation}\label{eq:hurry}
\delta= \begin{cases}
0 & \text{ if $k\not\equiv e\bmod{2}$,}\\
1 & \text{ if $k\equiv e\bmod{2}$.}
\end{cases}
\end{equation}
We claim that the assumption $\alpha\geq 2(d-1)$ implies that $k\geq 2$, or else $k=1$ and $\delta=0$. 
This is obvious when 
$\alpha<\frac{d(e+1)}{2}$. Suppose that  $k=1$ and $\alpha=\frac{d(e+1)}{2}$. Then $\iota=0$ and
$\ell=d-2$, whence  $\alpha=2(d-1)=\frac{d(e+1)}{2}$. Since $d\geq 3$, this equation has no solutions in odd integers $e$. Thus $\delta=0$.

Recalling \eqref{eq:nu-hat} and substituting for $\alpha$, we find that 
\begin{align*}
\hat \nu
&=L(k-\delta)+\beta-de-2 
-2\iota-2k(d-1)-2\ell\\
&=(L-2(d-1))k-\delta L
+\beta-de-2 
-2\iota-2\ell\\
&\geq (L-2(d-1))k-\delta L
-d+3 
-2\iota,
\end{align*}
since  $\beta\geq (e+1)d+1$ and $\ell\leq d-2$.
Taking $3-2\iota\geq 0$, we have therefore shown that 
$\hat\nu\geq \hat\nu_0$, with 
$$
 \hat\nu_0=
 (L-2(d-1))k-\delta L
-d.
$$
If $k\geq 2$, then we take $\delta\leq 1$ to conclude that
$$
\hat\nu_0\geq (2-\delta)L-4(d-1)-d\geq 
L-5d+4.
$$
Thus $\hat\nu_0>0$ if $n>n_0(d)$.
Alternatively, if $k=1$ then we must have $\delta=0$. 
It follows that
$$
\hat\nu_0= L-2(d-1)-d=
L-3d+2,
$$
whence $\hat\nu_0>0$ if 
$n>n_0(d)$, since $n_0(d)\geq 2^{d-1}\cdot (3d-2)$ in \eqref{eq:n0}.

\subsection*{Case 2: 
$\alpha +de-d+2> \beta$
 and $\beta\leq (e+1)d$}

In this case $M=(e+1)d-\beta$. It follows from  \eqref{eq:basic}
that 
$$
\Gamma=\frac{1}{d-1}\times \begin{cases}
\alpha+(e+1)d-\beta &\text{ if $\beta>2\alpha$},\\
\alpha+(e+1)d-\beta-1 
&\text{ if $\beta\leq 2\alpha$}.
\end{cases}
$$
We proceed as before. Thus for  $\iota\in \{0,1\}$, we write
\begin{equation}\label{eq:sheep}
\alpha+(e+1)d-\beta-\iota=k(d-1)+\ell,
\end{equation}
with  $k\in \ZZ_{\geq 0}$ and $\ell\in \{0,\dots,d-2\}$. 
Then \eqref{eq:eta} implies that 
$(e+1)\eta=k-\delta$, where $\delta$ is given by \eqref{eq:hurry}.
If $k\geq 2$ then \eqref{eq:nu-hat} yields
\begin{align*}
\hat \nu
&=L(k-\delta)-\beta+de-2 
-2\iota-2k(d-1)-2\ell+2d\\
&=(L-2(d-1))k-\delta L
-\beta+de-2 
-2\iota-2\ell+2d\\
&\geq L-4d+4-\beta+de,
\end{align*}
since  $\delta,\iota\leq 1$ and $\ell\leq d-2$.
But $\beta\leq (e+1)d$, and so it follows that 
$\hat\nu \geq L-5d+4$, which is positive if $n>n_0(d)$.
Suppose that $k\leq 1$. Then, on taking $\iota\leq 1$ and $\ell\leq d-2$ in \eqref{eq:sheep}, we must have that 
$$
\alpha +de-d+2\leq \beta,
$$
which contradicts the hypothesis.

\subsection*{Case 3: $\alpha\leq 2(d-1)$ and $\beta\geq (e+1)d+1$}

In this case we return to \eqref{eq:to-return-to}, and we recall the definition \eqref{eq:kappa} of $\kappa$. 
Suppose first that $\alpha=0$.
It follows from Lemma \ref{lem:pointwise''} that 
$S(a/r+\theta)\ll |P|^n q^{-L}$ if
$$
1+\kappa(d-1)\leq \beta \leq de+1.
$$
The upper bound $\beta\leq de+1$  follows from the definition of the minor arcs when $\alpha=0$. Moreover, the lower bound holds, since for $e\geq 1$ it follows from \eqref{eq:basic} that
$
\beta\geq  d\geq 1+\kappa(d-1).$
Recalling \eqref{eq:hat-mu}, we conclude that 
$$
E(\alpha,\beta)\ll q^{-\beta+1+(e+1)n-L}=q^{\hat\mu-\hat\nu},
$$
with $\hat\nu=L+\beta-de-2\geq L>0$, which is satisfactory. 

Suppose next that $\alpha\geq 1$. Then 
$S(a/r+\theta)\ll |P|^n q^{-L}$, by 
Lemma \ref{lem:pointwise'}, provided  that 
\begin{equation}\label{eq:hungry}
e\geq 1, \quad
1\leq \alpha <de+1-\kappa(d-1) \quad \text{ and } \quad \alpha-\beta<-\kappa(d-1),
\end{equation}
or 
\begin{equation}
 \label{eq:hungrier}
 e=1, \quad  2\leq \alpha\leq d \quad \text{ and } \quad \alpha-\beta\leq -d.
\end{equation}

In view of \eqref{eq:basic}, it is easily seen that $\alpha-\beta<-(d-1)\leq -\kappa(d-1)$.
Next, we claim that $2d-2<de+1-\kappa(d-1)$ for any $e\geq 2$. 
This is enough to confirm \eqref{eq:hungry}, since $\alpha\leq 2(d-1)$.
The claim is obvious when   $\kappa=1$ and $e\geq 3$.
On the other hand, if $\kappa=0$ then $e\geq 2$ and it is clear that 
$2d-2\leq 2d+1\leq de+1$. 
Next, suppose that  $e=1$, so that $\kappa=1$.
If $\alpha=1$ then we are plainly in the situation covered by
\eqref{eq:hungry}. If $\alpha\geq 2$, on the other hand,  then 
\eqref{eq:basic} implies that $\alpha\leq d$ and $\alpha-\beta\leq -d$, so that we are in the case covered by \eqref{eq:hungrier}.
It follows that 
$$
E(\alpha,\beta)\ll q^{2\alpha-\beta+1+(e+1)n-L}=q^{\hat\mu-\hat\nu},
$$
with 
\begin{align*}
\hat\nu=L+\beta-de-2-2\alpha
&\geq  L+d-1-2\alpha\\
&\geq L-3d+3,
\end{align*}
since $\alpha\leq 2(d-1)$ and $\beta\geq (e+1)d+1$.
This is positive for $n>n_0(d)$.  

\subsection*{Case 4: $\alpha+de-d+2\leq \beta$ and $\beta\leq (e+1)d$}

We begin as in the previous case. If  $\alpha=0$, the same argument goes through, leading to 
$
E(\alpha,\beta)\ll q^{\hat\mu-\hat\nu},
$
with $\hat\nu=L+\beta-de-2\geq L-d.$ This is certainly  positive for $n>n_0(d)$. 
Suppose next that $\alpha\geq 1$. Then 
$S(a/r+\theta)\ll |P|^n q^{-L}$, by 
Lemma \ref{lem:pointwise'}, provided  that \eqref{eq:hungry} or \eqref{eq:hungrier} hold.  Note that
$$
\alpha\leq \beta-de+d-2\leq 2d-2< 
de+1-\kappa(d-1),
$$
for any $e\geq 2$, 
by the calculation in the previous case. Likewise, the previous argument shows that we are covered by \eqref{eq:hungry} or \eqref{eq:hungrier} when $e=1$.
Thus we find that 
$E(\alpha,\beta)\ll q^{\hat\mu-\hat\nu},$
with 
\begin{align*}
\hat\nu=L+\beta-de-2-2\alpha
&\geq  L-d-\alpha\\
&\geq L-3d+2,
\end{align*}
since $\alpha\leq 2(d-1)$.
This is also  positive for $n>n_0(d)$.

\end{document}